\newtheorem{defi}{\bf D\scriptsize EFINITION \normalsize}
\newtheorem{lm}{\bf L\scriptsize EMMA \normalsize}
\newtheorem{dk}{\bf C\scriptsize OROLLARY \normalsize}
\newtheorem{rem}{\bf R\scriptsize EMARK \normalsize}
\newtheorem{exa}{\bf E\scriptsize XAMPLE \normalsize}
\newtheorem{pro}{\bf P\scriptsize ROBLEM \normalsize}
\newtheorem{prop}{\bf P\scriptsize ROPOSITION \normalsize}
\newtheorem{no}{\bf N\scriptsize OTE \normalsize}
\newenvironment{remark}{\begin{rem}\rm}{\end{rem}}
\def\kopr{\hfill\raisebox{3pt}{\framebox{$\star$}}}
\newenvironment{example}{\begin{exa}\rm}{$\kopr$\end{exa}}
\newenvironment{definition}{\begin{defi}\rm}{\end{defi}}
\newenvironment{lemma}{\begin{lm}\it}{\end{lm}}
\newenvironment{corollary}{\begin{dk}\it}{\end{dk}}
\newenvironment{proposition}{\begin{prop}\it}{\end{prop}}
\newcommand{\inte}[2]{\int \limits_{#1}^{#2}}
\newcommand{\nadsebou}[2]{\begin{array}{c} #1 \\ #2 \end{array}}
\newcommand{\hzav}[1]{\left[ #1 \right]}
\newcommand{\zav}[1]{\left( #1 \right)}
\newcommand{\hypzav}[1]{\left. #1 \right|}
\newcommand{\imag}{{\rm i}}
\newcommand{\abs}[1]{\left| #1 \right|}
\renewcommand{\arctan}[1]{{\rm arctan}#1}
\renewcommand\Re{\operatorname{Re}}
\renewcommand\Im{\operatorname{Im}}
\numberwithin{equation}{section}
\begin{document}
\title{Hypergeometric form of Fundamental theorem of calculus}
 \author{Petr Blaschke}
 \thanks{Supported by GA \v CR grant no. 201/12/G028}
\address{ Mathematical Institute, Silesian University in Opava, Na Rybnicku 1, 746 01 Opava, Czech Republic}
\keywords{Hypergeometric functions; Fundamental theorem of calculus; Antiderivative; Integration method}
\subjclass[2010]{Primary 33C20, Secondary 30B10. }
\email{Petr.Blaschke@math.slu.cz}
\begin{abstract} 
We introduce a natural method of computing antiderivatives of a large class of functions (holomorphic near the origin) which stems from the observation that the series expansion of an antiderivative differs from the series expansion of the corresponding integrand by just two Pochhammer symbols. All antiderivatives are thus, in a sense, ``hypergeometric''. And hypergeometric functions are therefore the most natural functions to integrate. This paper would like to make two points: First, the method presented is \textit{easy}. So much so that it can be taught in undergraduate university level. And second: It may be used to prove some of the more challenging examples computed only by heuristic processes like Method of brackets.
\end{abstract}
\maketitle
\section{Introduction}\label{Intro}
The oldest method of evaluating definite integrals is the ``Fundamental theorem of calculus'', i.e. the fact that
$$
\inte{a}{b}f'(x){\rm d}x=f(a)-f(b).
$$

If the antiderivative is representable in terms of elementary functions, this is, indeed, way to go. But in the opposite case (which is generic), situation becomes much more complicated.

G. Cherry in his works \cite{Cherry1},\cite{Cherry2},\cite{Cherry3}, made an interesting point that even though the antiderivative of
$$
\frac{x^3}{\ln(x^2-1)},
$$ 
is representable in terms of elementary functions and the logarithmic integral function li$(x):=\int \frac{1}{\ln x}{\rm d}x$, specifically
$$
\int \frac{x^3}{\ln(x^2-1)}{\rm d}x=\frac12 {\rm li}\zav{(x^2-1)^2}+\frac12 {\rm li}(x^2-1)+c,
$$
the same cannot be accomplished for the nearly the same function
$$
\frac{x^2}{\ln(x^2-1)},
$$  
and new special function is required.

This example shows that it is very difficult beforehand to decide on the class of functions in which the antiderivative should be looked for. Existence of a single, almost universally useful class seems quite impossible.

We can even say that, for this reason, the  approach of antiderivatives somewhat falls out of grace of the mathematical community and -- going as back as Cauchy -- people start treating definite integrals as an independent objects, looking for ways how to evaluate them without computing antiderivatives.

There are, for example, at least 4 methods (known to the author) how to evaluate the famous definite integral
$$
\inte{-\infty}{\infty}\frac{\sin x}{x}{\rm d}x,
$$    
none of which deals with the antiderivative, since it is not elementary. Those methods are, in turn: Laplace transform, Fourier transform, calculus of residues and -- the newest addition to the subject -- the ``method of brackets'' developed in \cite{mob}.

While these approaches works quite handsomely and even elegantly, they do not exactly satisfy a freshmen idea of ``simple''. Great care about various convergence issues has to be taken care of (especially with complex contour integration in the calculus of residues) and the method of brackets is not even rigorous (yet). 

In addition, these methods varies greatly performance-vise from integral to integral. Instead of learning a single superior method, one has to master them all to be effective. 

For the integral above, to illustrate this point, is perhaps the Fourier transform the most convenient tool to pick, even more so for the related integral
$$
\inte{-\infty}{\infty}\zav{\frac{\sin x}{x}}^2{\rm d}x,
$$
but the Fourier transform is absolutely teethless, dealing with gaussian-like integrals:
$$
\inte{-\infty}{\infty} e^{-x^2}{\rm d}x, \inte{-\infty}{\infty} e^{-x^3}{\rm d}x, \dots
$$ 
for which the calculus of residue is more suited (leaving the method of brackets aside for the moment). 

Of course, there is also Gauss's own approach for the first integral in the above list -- converting it into a double integral by taking its square. This is perhaps the cleverest trick there is regarding integration. A spectacular example of ``outside the box'' (or may be ``outside the dimension'') thinking. But for its ingenuity it is remarkable limited in its use since it does not appear -- to the author knowledge at least -- anywhere else. Thus, another method to learn.

\bigskip
We are going to argue that those integrals (and many more) can be computed using simple, single procedure that would please any freshmen because it does not differ from the Fundamental theorem of calculus.

Part of this procedure is the realization that there \textit{is} a large family of functions which has amazing properties concerning integration. It is as if it was made for it (but, actually, it was not). These properties includes:
\begin{itemize}
\item{The family is closed under the operation making antiderivatives.}
\item{Contains many elementary functions, especially the ``troublesome'' ones like}
$$
\frac{\sin x}{x},\zav{\frac{\sin x}{x}}^2,\frac{\arctan x}{x},e^{x^2},e^{x^3},\dots
$$
\item{There are \textit{multiple} ways how to represent each function, which allows one to make connections between integrals that links no conceivable change of variable.}
\item{And, finally, computing the antiderivate is very simple. It requires no calculus, no algebra, in fact, no mental effort at all. Just adding two parameters.}
\end{itemize}

This wonderful family is, of course, generalized hypergeometric functions $\!\! \ _p F_q$ and their antiderivatives can be computed as follows:
$$
\int x^\alpha \!\! \ _p F_q\zav{\nadsebou{a_1 \dots a_p}{c_1 \dots c_q};\gamma x^\beta}{\rm d}x=\frac{x^{\alpha+1}}{\alpha+1} \!\! \ _{p+1} F_{q+1}\zav{\nadsebou{a_1 \dots a_p\quad \frac{\alpha+1}{\beta}}{c_1 \dots c_q\quad 1+\frac{\alpha+1}{\beta}};\gamma x^\beta}+c.
$$
We will make this precise in Section \ref{S3}.

Even more function can be represented with various multi-variable hypergeometric functions.

But instead of just moving to higher dimensions, we are going to employ the concept of ``hypergeometrization'' which we briefly introduce in Section \ref{S2}. This is very convenient tool how to speak about hypergeometric functions. It enables the user to make statements about them which includes various numbers of parameters and various dimension simultaneously, instead of making a statement for each case separately (as we will see). 

Most importantly, it will enable us to (symbolically) integrate \textit{all} functions (holomorphic near the origin).

Formally, we prove the following:
\begin{proposition}\label{HFFTC} (Hypergeometric form of fundamental theorem of calculus) Let $f$ be a function holomorphic near $0$, $\alpha\not=-1,\beta\not=0$, $-\frac{\alpha+1}{\beta}\not\in\mathbb{N}$. Then
\begin{equation}\label{HFFTCformula}
\int x^\alpha f\zav{ x^\beta}{\rm d}x=\frac{x^{\alpha+1}}{\alpha+1} f\zav{\hypzav{\nadsebou{\frac{\alpha+1}{\beta}}{1+\frac{\alpha+1}{\beta}}} x^{\beta}}+c,
\end{equation}
\end{proposition}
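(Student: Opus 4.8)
The plan is to prove \eqref{HFFTCformula} directly from the power-series definition of both sides, since this makes the role of the \emph{two Pochhammer symbols} announced in the abstract completely transparent. Write $f(z)=\sum_{n=0}^\infty a_n z^n$ for the Taylor expansion of $f$ about the origin, convergent on some disc $\abs{z}<R$. Then, for $x$ small enough that $x^\beta$ lies in this disc, the integrand has the absolutely convergent expansion $x^\alpha f\zav{x^\beta}=\sum_{n=0}^\infty a_n x^{\alpha+\beta n}$.

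First I would integrate term by term. Because $\alpha\neq-1$ and $-\frac{\alpha+1}{\beta}\notin\mathbb{N}$, none of the exponents $\alpha+\beta n+1$ vanishes, so every term integrates to a genuine power (no logarithm ever appears) and
$$\int x^\alpha f\zav{x^\beta}\,{\rm d}x=\sum_{n=0}^\infty a_n\frac{x^{\alpha+\beta n+1}}{\alpha+\beta n+1}+c.$$
Term-by-term integration is legitimate since the series converges uniformly on compact subsets of its domain of convergence; I would note that the factor $1/(\alpha+\beta n+1)$ is bounded in $n$, so the antiderivative series has radius of convergence at least $R$.

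The key step is then purely \emph{algebraic}. Setting $\mu:=\frac{\alpha+1}{\beta}$ one has $\alpha+1=\beta\mu$ and $\alpha+\beta n+1=\beta(\mu+n)$, whence
$$\frac{\alpha+1}{\alpha+\beta n+1}=\frac{\mu}{\mu+n}=\frac{(\mu)_n}{(\mu+1)_n},$$
the last equality being the elementary cancellation $\mu(\mu+1)\cdots(\mu+n-1)/[(\mu+1)\cdots(\mu+n)]=\mu/(\mu+n)$. Factoring $\frac{x^{\alpha+1}}{\alpha+1}$ out of the sum therefore gives
$$\int x^\alpha f\zav{x^\beta}\,{\rm d}x=\frac{x^{\alpha+1}}{\alpha+1}\sum_{n=0}^\infty a_n\frac{(\mu)_n}{(\mu+1)_n}x^{\beta n}+c.$$
It then remains only to recognize the right-hand sum as exactly the hypergeometrization of $f$ from Section \ref{S2}: inserting the upper parameter $\mu$ and the lower parameter $1+\mu$ multiplies the $n$-th Taylor coefficient by $(\mu)_n/(\mu+1)_n$, which is precisely $f\zav{\hypzav{\nadsebou{\frac{\alpha+1}{\beta}}{1+\frac{\alpha+1}{\beta}}}x^\beta}$, yielding \eqref{HFFTCformula}.

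I expect the only real subtlety to be bookkeeping rather than a genuine obstacle: making sure the definition of hypergeometrization in Section \ref{S2} matches the coefficient transformation above verbatim, and confirming that the three hypotheses are exactly what is needed to keep $\mu$ finite and nonzero and to keep every denominator $\alpha+\beta n+1=\beta(\mu+n)$ away from zero for all $n\ge0$ (the case $n=0$ being handled by $\alpha\neq-1$ and the cases $n\ge1$ by $-\frac{\alpha+1}{\beta}\notin\mathbb{N}$). As a self-contained alternative I would instead verify the formula by differentiating the right-hand side: using $\frac{(\mu)_n}{(\mu+1)_n}\cdot\frac{\mu+n}{\mu}=1$, term-by-term differentiation collapses the transformed series back to $\sum_{n=0}^\infty a_n x^{\alpha+\beta n}=x^\alpha f\zav{x^\beta}$, confirming the identity up to the additive constant $c$.
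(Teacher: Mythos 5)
Your proof is correct and follows essentially the same route as the paper: the paper's entire proof is the single observation that $\int x^{\alpha+\beta k}\,{\rm d}x=\frac{x^{\alpha+\beta k+1}}{\alpha+\beta k+1}=\frac{x^{\alpha+1}}{\alpha+1}\frac{\left(\frac{\alpha+1}{\beta}\right)_k}{\left(1+\frac{\alpha+1}{\beta}\right)_k}x^{\beta k}+c$, which is exactly your key algebraic step $\frac{\alpha+1}{\alpha+\beta n+1}=\frac{\mu}{\mu+n}=\frac{(\mu)_n}{(\mu+1)_n}$ applied termwise to the Taylor series. The additional care you take with term-by-term integration and with checking that the hypotheses keep every denominator nonzero is sound but is left implicit in the paper.
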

where
$$
f\zav{\hypzav{\nadsebou{a}{c}};x}:=\sum_{k=0}^{\infty}\frac{f^{(k)}(0)}{k!}\frac{(a)_{k}}{(c)_k}x^k.
$$
This is based on the observation that what is true for hypergeometric function, is true for all -- that making the antiderivative just adds two Pochhammer symbols to the Taylor series.

We will also prove the exceptional case:
\begin{proposition}\label{HFFTCln} Let $f$ be holomorphic near origin, $\alpha\not=0$. Then
$$
\int\frac{1}{x}f(x^\alpha){\rm d}x=f(0)\ln x+\frac{f(x^\alpha)-f(0)}{\alpha}-\frac{x^\alpha}{\alpha}\hzav{\epsilon}f'\zav{\hypzav{\nadsebou{1+\epsilon}{2+\epsilon}}x^\alpha}+c,
$$
\end{proposition}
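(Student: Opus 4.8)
The plan is to reduce everything to the Taylor series of $f$ and to integrate term by term, exactly as in Proposition \ref{HFFTC}, the only new feature being the single logarithmic term that the power rule cannot produce. Writing $f(z)=\sum_{k=0}^\infty a_k z^k$ with $a_k=f^{(k)}(0)/k!$, convergent on some disc $\abs{z}<R$, I would substitute $z=x^\alpha$ and divide by $x$ to obtain $\frac1x f(x^\alpha)=\sum_{k=0}^\infty a_k x^{\alpha k-1}$. The $k=0$ term integrates to $a_0\ln x=f(0)\ln x$, while every term with $k\ge 1$ integrates by the power rule to $\frac{a_k}{\alpha k}x^{\alpha k}$. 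Hence, modulo justifying the termwise integration, $\int\frac1x f(x^\alpha)\,{\rm d}x=f(0)\ln x+\sum_{k=1}^\infty \frac{a_k}{\alpha k}x^{\alpha k}+c$, and the whole task becomes recognizing this last sum as the closed form asserted on the right-hand side.

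Next I would match the tail sum to the two remaining terms of the statement by means of the splitting $\frac{1}{\alpha k}=\frac1\alpha-\frac1\alpha\cdot\frac{k-1}{k}$. The first piece sums to $\frac1\alpha\sum_{k\ge1}a_k x^{\alpha k}=\frac{f(x^\alpha)-f(0)}{\alpha}$, which is precisely the second term of the formula. It then remains to show that the second piece, $\frac1\alpha\sum_{k\ge1}a_k\frac{k-1}{k}x^{\alpha k}$, coincides with $\frac{x^\alpha}{\alpha}\hzav{\epsilon}f'\zav{\hypzav{\nadsebou{1+\epsilon}{2+\epsilon}};x^\alpha}$.

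For this I would unwind the hypergeometrized $f'$. Since $f'(z)=\sum_{j\ge0}(j+1)a_{j+1}z^j$, the hypergeometrization with parameters $\nadsebou{1+\epsilon}{2+\epsilon}$ attaches the Pochhammer ratio $(1+\epsilon)_j/(2+\epsilon)_j$ to the $j$-th coefficient. The key computation is that this ratio telescopes, $\frac{(1+\epsilon)_j}{(2+\epsilon)_j}=\frac{1+\epsilon}{j+1+\epsilon}$, so after multiplying by $\frac{x^\alpha}{\alpha}$, evaluating at $x^\alpha$, and reindexing $k=j+1$ one gets $\frac{x^\alpha}{\alpha}f'\zav{\hypzav{\nadsebou{1+\epsilon}{2+\epsilon}};x^\alpha}=\frac1\alpha\sum_{k\ge1}a_k\frac{k(1+\epsilon)}{k+\epsilon}x^{\alpha k}$. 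A short expansion gives $\frac{k(1+\epsilon)}{k+\epsilon}=1+\frac{k-1}{k}\epsilon+O(\epsilon^2)$, so applying the operator $\hzav{\epsilon}$, which extracts the coefficient of $\epsilon$, yields exactly $\frac1\alpha\sum_{k\ge1}a_k\frac{k-1}{k}x^{\alpha k}$, completing the match.

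The main obstacle will be rigor rather than algebra: justifying that the termwise integration, the reindexing, and the action of $\hzav{\epsilon}$ may all be interchanged with the infinite sum. Holomorphy of $f$ near $0$ furnishes a positive radius of convergence, so on a suitable punctured disc the series for $\frac1x f(x^\alpha)$ converges uniformly on compact subsets and termwise integration is legitimate; the hypothesis $\alpha\neq 0$ is exactly what guarantees $\alpha k\neq 0$ for $k\ge1$, so the power rule never triggers the logarithmic exception a second time. The one point deserving genuine care is commuting $\hzav{\epsilon}$ with the sum: I would observe that for $\epsilon$ in a small neighborhood of $0$ the factors $\frac{k(1+\epsilon)}{k+\epsilon}$ are bounded uniformly in $k$, so the $\epsilon$-dependent series converges locally uniformly and differentiation in $\epsilon$ passes inside, which is precisely the content of the hypergeometrization formalism of Section \ref{S2}.
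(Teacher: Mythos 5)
Your proof is correct, but it takes a genuinely different route from the paper's. The paper proceeds by integration by parts: it writes $\int\frac1x f(x^\alpha)\,{\rm d}x=\ln x\, f(x^\alpha)-\alpha\hzav{\epsilon}\int x^{\alpha-1+\epsilon}f'(x^\alpha)\,{\rm d}x$, using the device $\ln x\cdot x^{\alpha-1}=\hzav{\epsilon}x^{\alpha-1+\epsilon}$, then applies the already-proven Proposition \ref{HFFTC} to the $\epsilon$-shifted integrand, extracts the coefficient of $\epsilon$ by the Leibniz rule, and finishes with the Taylor-remainder identity (\ref{Taylrem}) to rewrite $f'\zav{\hypzav{\nadsebou{1}{2}}x^\alpha}$ as $\zav{f(x^\alpha)-f(0)}/x^\alpha$. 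You instead bypass Proposition \ref{HFFTC} entirely: you integrate the Taylor series termwise, split $\frac{1}{\alpha k}=\frac1\alpha-\frac1\alpha\cdot\frac{k-1}{k}$, and check by hand that the telescoping Pochhammer ratio $\frac{(1+\epsilon)_j}{(2+\epsilon)_j}=\frac{1+\epsilon}{j+1+\epsilon}$ produces exactly the factor $\frac{k-1}{k}$ after applying $\hzav{\epsilon}$; all of these computations are correct. The paper's argument is shorter and showcases the formalism it is promoting (in particular the recurring trick of encoding $\ln x$ as the $\epsilon$-coefficient of $x^\epsilon$, and the reuse of the main integration formula), whereas yours is more elementary and self-contained and makes transparent \emph{why} the particular parameter pair $1+\epsilon$, $2+\epsilon$ appears. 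Your remarks on termwise integration, on $\alpha\neq0$ ensuring $\alpha k\neq0$ for $k\geq1$, and on commuting $\hzav{\epsilon}$ with the locally uniformly convergent series are an adequate treatment of the convergence issues, which the paper itself leaves implicit.
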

where here and throughout the paper $\hzav{\epsilon^k}$ denotes the $k$-th Taylor coefficient of the function to the right, i.e.
$$
[\epsilon^k]f(\epsilon)=\frac{f^{(k)}(0)}{k!}.
$$

Consequences of these theorems would be illustrated on number of examples. Among other we are going to show that:
\begin{equation}\label{maplenogo}
\inte{0}{\infty}\sqrt[8]{\frac{x^2+8x+8-4(2+x)\sqrt{1+x}}{x^{11}}}{\rm d}x=\frac{4 \Gamma^2\zav{\frac14}}{3\sqrt{2-\sqrt{2}}\sqrt{\pi}},
\end{equation}
a result that cannot be obtained using the mathematical software MAPLE 2016. 

Also to demonstrate possible impact in Number theory, we are going to re-derive known representations of Catalan's constant $G$:
$$
G=\Re \zav{\!\! \ _3 F_2\zav{\nadsebou{1\quad 1\quad 1}{2\quad 2};\imag}}=\Im\zav{\hzav{\epsilon^2}\!\! \ _2 F_1\zav{\nadsebou{\epsilon\quad \epsilon}{1};\imag}}=\frac{1}{8}\zav{\psi'\zav{\frac14}-\pi^2}.
$$

Of course, not \textit{all} functions are representable by $\!\! \ _p F_q$. But even in this case all is not lost. We can either perform some change of variable which brings the integrand to one that \textit{is} representable by a hypergeometric function or  -- which is one of the main points of this article -- we can often represent it as a hypergeometric function \textit{with differentiated parameters}.
 
For instance, together with many other examples we will show in Section \ref{S4} that
$$
\zav{\frac{{\rm arcsin} x}{x}}^3=-\frac32  \hzav{\epsilon^2} x^{-2}\!\! \ _2 F_1\zav{\nadsebou{\frac12-\epsilon\quad \frac12+\epsilon}{\frac32};x^2}. 
$$

From this representation it is easy to compute:
$$
\inte{0}{1}\zav{\frac{{\rm arcsin}\, x}{x}}^3{\rm d}x=\frac32\pi\ln 2-\frac{\pi^3}{16},
$$
a result that is not impossible to derive by other means but our method is completely straightforward and direct once this representation is known.

Also, there are  some general rules regarding hypergeometric function with differentiated parameters we are going to present.

\begin{proposition}\label{genrule1prop}
 For $x\not \in [1,\infty)$:
\begin{align*}
\hzav{\epsilon}\!\! \ _2 F_1\zav{\nadsebou{a+\epsilon\quad b+\epsilon}{a+b};x}&=\ln\frac{1}{1-x}\!\! \ _2 F_1\zav{\nadsebou{a\quad b}{a+b};x}.
\end{align*}
\end{proposition}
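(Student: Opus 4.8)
The plan is to exhibit both sides as solutions of one and the same second-order linear ODE and then to pin them down at the origin. Set $G(\epsilon,x):={}_2 F_1\zav{\nadsebou{a+\epsilon\quad b+\epsilon}{a+b};x}$ and $F(x):=G(0,x)$. The defining series converges locally uniformly in $(\epsilon,x)$ for $|x|<1$, so $G$ is jointly holomorphic there; hence $\partial_\epsilon$ commutes with $\partial_x$ and with termwise operations, and the object to be identified is exactly $H(x):=\hzav{\epsilon}G(\epsilon,x)=\partial_\epsilon G(\epsilon,x)\big|_{\epsilon=0}$. Throughout, primes denote $\partial_x$.

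First I would record that for each fixed $\epsilon$ the function $G(\epsilon,\cdot)$ solves the hypergeometric equation with upper parameters $a+\epsilon,b+\epsilon$ and lower parameter $a+b$,
\begin{equation*}
x(1-x)G''+\hzav{(a+b)-(a+b+2\epsilon+1)x}G'-(a+\epsilon)(b+\epsilon)\,G=0 .
\end{equation*}
Differentiating this in $\epsilon$ and setting $\epsilon=0$—permissible by the joint holomorphy—produces an \emph{inhomogeneous} equation for $H$: using $\partial_\epsilon\hzav{(a+b+2\epsilon+1)x}\big|_{0}=2x$ and $\partial_\epsilon\hzav{(a+\epsilon)(b+\epsilon)}\big|_{0}=a+b$, the terms carrying $F$ rather than $H$ move to the right and give
\begin{equation*}
x(1-x)H''+\hzav{(a+b)-(a+b+1)x}H'-ab\,H=2xF'+(a+b)F .
\end{equation*}

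Next I would verify that $P(x):=\ln\frac{1}{1-x}\,F(x)$ satisfies the \emph{same} equation. Writing $g:=\ln\frac1{1-x}$, so $g'=\frac1{1-x}$ and $g''=\frac1{(1-x)^2}$, I substitute $P=gF$ into the homogeneous hypergeometric operator on the left. Every term proportional to $g$ vanishes because $F$ solves the homogeneous equation, and the surviving terms are
\begin{equation*}
x(1-x)\zav{\frac{F}{(1-x)^2}+\frac{2F'}{1-x}}+\hzav{(a+b)-(a+b+1)x}\frac{F}{1-x}=2xF'+(a+b)F ,
\end{equation*}
the last equality holding because the two $\frac{F}{1-x}$ contributions combine with total numerator $x+(a+b)-(a+b+1)x=(a+b)(1-x)$, which cancels the denominator. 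Hence $H-P$ is holomorphic near $0$ and solves the homogeneous hypergeometric equation with parameters $a,b,a+b$.

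It remains to deduce $H=P$. The origin is a regular singular point of that equation, with indicial exponents $0$ and $1-(a+b)$; when $1-(a+b)\notin\mathbb{Z}_{\ge0}$ the second Frobenius solution is non-holomorphic at $0$, so the only solution holomorphic there is a multiple of $F$ and $H-P=\lambda F$. Since $G(\epsilon,0)\equiv1$ gives $H(0)=0$, and $g(0)=0$ gives $P(0)=0$, evaluating at $x=0$ with $F(0)=1$ forces $\lambda=0$, whence $H=P$ on $|x|<1$. Analytic continuation in $x$ to the cut plane $x\notin[1,\infty)$, and in $(a,b)$ off the poles of the Pochhammer ratios, then yields the identity for all admissible parameters, covering in particular the exceptional cases $a+b\in\mathbb{Z}_{\le1}$. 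The main obstacle I anticipate is precisely this uniqueness step—ensuring that no singular or logarithmic second solution contaminates $H-P$—which is why I would route the exceptional parameter values through the continuation argument rather than treating them head-on.
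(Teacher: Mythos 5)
Your proof is correct, but it takes a genuinely different route from the paper's. The paper's argument is a three-line symmetry trick: apply the Pfaff transform (\ref{Pfaff}) twice (i.e.\ the Euler transformation), which for the special lower parameter $c=a+b$ sends the family to $(1-x)^{-2\epsilon}\ _2F_1\zav{\nadsebou{a-\epsilon\quad b-\epsilon}{a+b};x}$; extracting the coefficient of $\epsilon$ by the Leibniz rule then reproduces the unknown quantity on the right-hand side with a minus sign, and solving the resulting linear relation gives the identity at once, directly on the cut plane where the Pfaff transform is valid. You instead differentiate the hypergeometric ODE in the parameter, obtain the inhomogeneous equation $x(1-x)H''+\hzav{(a+b)-(a+b+1)x}H'-abH=2xF'+(a+b)F$, check that $\ln\frac{1}{1-x}\,F$ satisfies the same equation, and conclude by uniqueness of solutions holomorphic at the regular singular point $x=0$ plus normalization at the origin; your computations (the $\epsilon$-derivative of the coefficients, and the cancellation $x+(a+b)-(a+b+1)x=(a+b)(1-x)$) are correct. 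Each approach has its merits: the paper's is shorter and exploits the self-duality of the parameter family under the Euler transform (which is exactly where the hypothesis $c=a+b$ enters, via the exponent $c-a-b=-2\epsilon$), while yours makes the role of $c=a+b$ visible in a different way (it is what makes $gF$ solve the inhomogeneous equation) and is the more systematic template for verifying other closed forms of parameter derivatives. The price you pay is the uniqueness step — ruling out contamination by the second Frobenius solution — and a final continuation argument in $x$ and in $(a,b)$; note that for all admissible $a+b$ (i.e.\ not a non-positive integer) the second solution at $0$ either carries a logarithm or a pole, so your uniqueness claim in fact holds without exception and the detour through parameter continuation is harmless but not strictly needed.
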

This fact will enable us to compute the integrals
$$
\inte{0}{\infty}\frac{\arctan\, x}{x^{2\alpha+1}}\ln\frac{1}{1+x^2}{\rm d}x=\frac{\pi}{4\alpha\cos(\pi\alpha)}\zav{\psi\zav{\frac12+\alpha}+\psi(\alpha)-\psi(1)-\psi\zav{\frac12}},\qquad  0<\alpha<\frac12,
$$
$$
\inte{0}{1}x\ln\frac{1}{1-x^2}K(x){\rm d}x=4(1-\ln 2),
$$
and
$$
\inte{0}{1}x\ln\frac{1}{1+x^2}K(\imag x){\rm d}x=\frac{1}{4\sqrt{2\pi}}\zav{(2-\ln 2)\Gamma^2\zav{\frac14}+4(\ln 2-4)\Gamma^2\zav{\frac34}},
$$
where
$$
 K(x):=\frac{\pi}{2}\!\! \ _2 F_1\zav{\nadsebou{\frac12\quad \frac12}{1};x^2},
$$
is the complete elliptic integral of the first kind.

This is the part of the paper that the author considers most novel.

\subsection{Relationship to Method of brackets} 
Method of brackets deals only with integrals of the form $\inte{0}{\infty}$, it does not compute antiderivatives and (unlike our approach) is not rigorous (yet).

But there is a great deal of similarity between it and our approach in the sense that both methods are chiefly preoccupied with the series expansion of the integrand.

We do not claim that our approach puts the method of brackets on rigorous ground in any way but we believe that it should be effective for similar types of integral.

Method of brackets works basically by ignoring convergence issues in swapping the series expansion and the integration, and only tries to resolve the divergences by some heuristic process, solving a system of linear equations (which works surprisingly well). We instead compute the antiderivative for which such a swapping is no issue and then convert the problem of evaluation a definite integral to a problem of evaluation a (possibly multi-dimensional) hypergeometric function in specific arguments.

The point is that hypergeometric functions are studied several centuries by known and the acquired knowledge is vast, literature numerous (for example \cite{Erdelyi},\cite{Luke},\cite{Lauricella},\cite{Srivastava},\cite{Olver}). All of this can be used to our benefit.

Particularly useful in this sense are websites: \textit{Digital Library of Mathematical Functions} \cite{dlmf}, which provides an easy access to numerous identities valid for hypergeometric functions. We will refer to them multiple times.

To really test our method we are going to concentrate on the integral:
$$
I_{\frac12}:=\inte{0}{\infty}\frac{1}{\zav{1+x^2}^{\frac32}\sqrt{\varphi(x)+\sqrt{\varphi(x)}}}{\rm d}x,\qquad \varphi(x):=1+\frac43\frac{x^2}{(1+x^2)^2},
$$
which has been incorrectly evaluated (as mentioned in \cite{mob}) in the table of definite integrals \cite{GR} to be equal to
$$
\frac{\pi}{2\sqrt{6}}.
$$
We will show that, in fact, this is a correct value for a very similar integral:
\begin{proposition}\label{P1} For $\varphi(x)$ as above it holds: 
$$
I_{true}:=\inte{0}{\infty}\frac{1}{\zav{1+x^2}^{\frac32}\sqrt{\varphi(x)+\sqrt{\varphi(x)^3}}}{\rm d}x=\frac{\pi}{2\sqrt{6}}.
$$
\end{proposition}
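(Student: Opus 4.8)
The plan is to show that, in spite of its forbidding appearance, this integrand possesses an \emph{elementary} antiderivative, so that the result follows from the ordinary Fundamental theorem of calculus, with Proposition \ref{HFFTC} entering only as the observation that makes the reduction natural. First I would record the algebraic simplification $\sqrt{\varphi+\sqrt{\varphi^3}}=\sqrt{\varphi}\,\sqrt{1+\sqrt{\varphi}}$ and note that the whole integrand is a function of $x^2$ alone. Writing it as $G(x^2)$, Proposition \ref{HFFTC} with $\alpha=0$, $\beta=2$ says that its antiderivative is $x\,G\zav{\hypzav{\nadsebou{\frac{1}{2}}{\frac{3}{2}}} x^2}$, i.e. merely the termwise integral $\inte{0}{x}G(\tau^2)\,{\rm d}\tau$; the content of the proof is to evaluate this in closed form. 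The structural key, I expect, is that $\varphi$ is invariant under $x\mapsto 1/x$.

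To exploit this I would substitute $x=\tan\theta$, which turns the integral into $\inte{0}{\pi/2}\frac{\cos\theta}{\sqrt{\varphi+\varphi^{3/2}}}\,{\rm d}\theta$ with $\varphi=1+\frac{1}{3}\sin^2 2\theta$, since $\frac{x^2}{(1+x^2)^2}=\frac{1}{4}\sin^2 2\theta$ and $(1+x^2)^{-3/2}{\rm d}x=\cos\theta\,{\rm d}\theta$. Here $\tfrac{1}{\sqrt{\varphi+\varphi^{3/2}}}$ is a function of $\sin^2 2\theta$, invariant under $\theta\mapsto\frac{\pi}{2}-\theta$, whereas $\cos\theta\mapsto\sin\theta$; hence the integral is unchanged if $\cos\theta$ is replaced by $\sin\theta$, and therefore by the average $\frac{1}{2}(\cos\theta+\sin\theta)=\frac{1}{2}\sqrt{1+\sin 2\theta}$. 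As this averaged integrand is symmetric about $\theta=\frac{\pi}{4}$, the substitution $s=\sin 2\theta$ (two-to-one, folding at $\frac{\pi}{4}$) collapses to the clean one-variable form
\[
I_{true}=\frac{1}{2}\inte{0}{1}\frac{{\rm d}s}{\sqrt{1-s}\,\sqrt{\varphi+\varphi^{3/2}}},\qquad \varphi=1+\frac{s^2}{3}.
\]

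From here the nested radical is removed in two standard moves. The hyperbolic substitution $s=\sqrt{3}\sinh\psi$ makes $\varphi=\cosh^2\psi$, so that $\sqrt{\varphi+\varphi^{3/2}}=\sqrt{2}\,\cosh\psi\,\cosh\frac{\psi}{2}$ and the factor $\cosh\psi$ cancels against ${\rm d}s=\sqrt{3}\cosh\psi\,{\rm d}\psi$. The surviving root $\sqrt{1-\sqrt{3}\sinh\psi}$ together with $\cosh\frac{\psi}{2}$ is then rationalized by the half-angle substitution $\eta=\tanh\frac{\psi}{2}$, after which
\[
I_{true}=\frac{\sqrt{3}}{\sqrt{2}}\inte{0}{2-\sqrt{3}}\frac{{\rm d}\eta}{\sqrt{1-2\sqrt{3}\,\eta-\eta^2}}=\frac{\sqrt{3}}{\sqrt{2}}\inte{0}{2-\sqrt{3}}\frac{{\rm d}\eta}{\sqrt{4-(\eta+\sqrt{3})^2}}.
\]
Completing the square has produced a pure arcsine; evaluating it gives $\frac{\sqrt{3}}{\sqrt{2}}\zav{{\rm arcsin}\,1-{\rm arcsin}\tfrac{\sqrt{3}}{2}}=\frac{\sqrt{3}}{\sqrt{2}}\zav{\frac{\pi}{2}-\frac{\pi}{3}}=\frac{\pi}{2\sqrt{6}}$, as claimed. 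The upper limit $\eta=2-\sqrt{3}$ is the image of $s=1$, since there $\sinh\psi_1=\frac{1}{\sqrt{3}}$, $\cosh\psi_1=\frac{2}{\sqrt{3}}$, and $\tanh\frac{\psi_1}{2}=\frac{\sinh\psi_1}{1+\cosh\psi_1}=2-\sqrt{3}$.

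The main obstacle is the symmetrization step: without recognizing the $x\mapsto 1/x$ (equivalently $\theta\mapsto\frac{\pi}{2}-\theta$) invariance of $\varphi$, the natural variable $s=\sin 2\theta=\frac{2x}{1+x^2}$ is two-to-one on $(0,\infty)$ and the reduction stalls, because $\varphi$ would then depend on $s$ only through the awkward combination $s^2(1-s^2)$. It is precisely the replacement of $\cos\theta$ by $\frac{1}{2}\sqrt{1+\sin 2\theta}$ that linearizes the dependence of $\varphi$ on the new variable (giving $\varphi=1+\frac{s^2}{3}$) and thereby makes the hyperbolic and half-angle rationalizations succeed. A secondary point to verify is the branch bookkeeping, namely that every square root stays positive on its range, and that the discriminant genuinely completes to $4-(\eta+\sqrt{3})^2$, which is what conjures the special values ${\rm arcsin}\,1$ and ${\rm arcsin}\frac{\sqrt{3}}{2}$ and hence the factor $\frac{\pi}{6}$.
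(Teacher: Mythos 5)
Your proof is correct, and it takes a genuinely different route from the paper. The paper stays inside its hypergeometric machinery: after the substitution $t=x/\sqrt{1+x^2}$ it writes $I_{true}=\frac{1}{\sqrt2}f_{true}\zav{\hypzav{\nadsebou{1\ \ \frac12}{\frac34\ \ \frac54}}-\frac13}$ with $f_{true}(x)=(1-x)^{-\frac12}\!\! \ _2F_1\zav{\nadsebou{\frac14\ \ \frac34}{\frac32};x}$, observes that Euler's transformation collapses this product into the single function $\!\! \ _2F_1\zav{\nadsebou{\frac54\ \ \frac34}{\frac32};x}$, whose upper parameters then cancel the hypergeometrization's lower parameters $\frac34,\frac54$, reducing the resulting $\!\! \ _4F_3$ to $\!\! \ _2F_1\zav{\nadsebou{1\ \ \frac12}{\frac32};-\frac13}=\sqrt3\arctan\frac{1}{\sqrt3}$. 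You instead produce an entirely elementary antiderivative by a chain of substitutions; I checked each step ($\varphi=1+\frac13\sin^22\theta$ and $(1+x^2)^{-3/2}{\rm d}x=\cos\theta\,{\rm d}\theta$ under $x=\tan\theta$; the symmetrization $\cos\theta\mapsto\frac12\sqrt{1+\sin2\theta}$; the folding $s=\sin2\theta$ giving $\frac12\int_0^1(1-s)^{-1/2}(\varphi+\varphi^{3/2})^{-1/2}{\rm d}s$ with $\varphi=1+\frac{s^2}{3}$; the cancellations in the hyperbolic and half-angle steps; the endpoint $\tanh\frac{\psi_1}{2}=2-\sqrt3$; and $1-2\sqrt3\eta-\eta^2=4-(\eta+\sqrt3)^2$), and all square roots stay nonnegative on the relevant ranges. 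In effect your hand-made symmetrization under $x\mapsto 1/x$ plays the role that the parameter cancellation $\nadsebou{\frac54\ \ \frac34}{\frac34\ \ \frac54}$ plays in the paper. What each approach buys: the paper's derivation explains structurally why $I_{true}$ (and not the misprinted $I_{\frac12}$) is the integral with a clean closed form, and generalizes instantly to Proposition \ref{P2}; yours is self-contained, requires no hypergeometric identities, and would convince a reader who distrusts the formal parameter calculus. The only blemish is cosmetic: the opening appeal to Proposition \ref{HFFTC} with $\alpha=0$, $\beta=2$ is never actually used and could be dropped.
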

which is, perhaps, what the original input in \cite{GR} should have been. This misprint is easy to spot with our method.

In fact, defining: 
$$
\varphi_\alpha(x):=1+4\alpha^2\frac{x^2}{(1+x^2)^2},
$$
we can obtain general result:
\begin{proposition}\label{P2} Let $\varphi_\alpha(x)$ as above. Then
$$
I_{\alpha,true}:=\inte{0}{\infty}\frac{1}{\zav{1+x^2}^{\frac32}\sqrt{\varphi_\alpha(x)+\sqrt{\varphi_\alpha(x)^3}}}{\rm d}x=\frac{\arctan\,\alpha}{\sqrt{2}\alpha}.
$$
\end{proposition}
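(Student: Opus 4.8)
The plan is to exploit exactly the philosophy of the paper: the awkward nested radical $1/\sqrt{\varphi_\alpha+\sqrt{\varphi_\alpha^3}}$, despite looking transcendental, is an \emph{algebraic} Gauss hypergeometric function of $\frac{x^2}{(1+x^2)^2}$, and once this is recognised the whole improper integral collapses, under term-by-term integration, to the Taylor series of $\arctan$. First I would set $\varphi_\alpha(x)=1+z$ with $z:=4\alpha^2\frac{x^2}{(1+x^2)^2}$ and factor $\varphi_\alpha+\sqrt{\varphi_\alpha^3}=\varphi_\alpha\zav{1+\sqrt{\varphi_\alpha}}$, so the troublesome factor is $g(1+z):=\big((1+z)(1+\sqrt{1+z})\big)^{-1/2}$. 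The crucial claim is the representation
$$
\frac{1}{\sqrt{\varphi_\alpha+\sqrt{\varphi_\alpha^3}}}=\frac{1}{\sqrt2}\,_2F_1\zav{\nadsebou{\frac34\quad\frac54}{\frac32};-z}.
$$

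I would establish this from the elementary (algebraic) evaluation $_2F_1\zav{\nadsebou{a\quad a+\frac12}{\frac32};\zeta^2}=\frac{(1+\zeta)^{1-2a}-(1-\zeta)^{1-2a}}{2(1-2a)\zeta}$ (cf. \cite{dlmf}), specialised to $a=\frac34$, hence $1-2a=-\frac12$, and taken at $\zeta=\imag\sqrt z$. Writing $1\pm\imag\sqrt z=(1+z)^{1/2}e^{\pm\imag\arctan{\sqrt z}}$ turns the right-hand side into $(1+z)^{-1/4}$ times a half-angle sine, which simplifies precisely to $g(1+z)$, with the normalisation $g(1)=1/\sqrt2$ fixing the correct branch. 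Equivalently, and perhaps more in the spirit of the paper, one simply checks that the Taylor coefficients of $g(1+z)$ are $\frac{(-1)^k}{\sqrt2}\frac{(3/4)_k(5/4)_k}{(3/2)_k\,k!}$, i.e.\ that adding two Pochhammer symbols to the binomial-type series of $g$ reproduces the claimed $_2F_1$.

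With the representation in hand I would expand $g(\varphi_\alpha)$ as its power series in $z$, multiply by $(1+x^2)^{-3/2}$, and integrate term by term. Since $z^k=(4\alpha^2)^k x^{2k}(1+x^2)^{-2k}$, each term is a Beta integral,
$$
\inte{0}{\infty}\frac{x^{2k}}{(1+x^2)^{2k+3/2}}{\rm d}x=\frac12 B\zav{k+\tfrac12,k+1}=\frac{(1/2)_k\,k!}{(3/2)_{2k}},
$$
and the duplication identity $(3/2)_{2k}=4^k(3/4)_k(5/4)_k$ cancels the factors $(3/4)_k(5/4)_k$ coming from $g$ against the $4^k$ from $z^k$. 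What survives is the telescoping ratio $(1/2)_k/(3/2)_k=1/(2k+1)$, giving
$$
I_{\alpha,true}=\frac{1}{\sqrt2}\sum_{k=0}^{\infty}(-1)^k\frac{(1/2)_k}{(3/2)_k}\alpha^{2k}=\frac{1}{\sqrt2}\sum_{k=0}^{\infty}\frac{(-1)^k\alpha^{2k}}{2k+1}=\frac{\arctan\alpha}{\sqrt2\,\alpha}.
$$

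The main obstacle is the first step: spotting that the triple-radical factor is exactly the algebraic $_2F_1$ with parameters $(\tfrac34,\tfrac54;\tfrac32)$ and pinning down the correct square-root branch so that both the sign of the argument $-z$ and the prefactor $1/\sqrt2$ come out right; everything afterwards is mechanical Pochhammer bookkeeping. Term-by-term integration is legitimate because for $\alpha^2<1$ the argument stays in $-z\in[-\alpha^2,0]$, where the integrand is positive and the partial sums are monotone with an integrable, summable majorant; the resulting identity in $\alpha$ then extends by analytic continuation to all admissible $\alpha$. Finally, Proposition \ref{P1} drops out as the case $\alpha=1/\sqrt3$, since $\arctan(1/\sqrt3)=\pi/6$ yields $\frac{\arctan\alpha}{\sqrt2\,\alpha}=\frac{\pi}{2\sqrt6}$.
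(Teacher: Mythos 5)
Your proof is correct, and at its core it is the same computation as the paper's, only with the hypergeometrization machinery unwound into explicit series manipulations. The paper first substitutes $t=x/\sqrt{1+x^2}$ to land on an integral over $[0,1]$, identifies the integrand with $\tfrac{1}{\sqrt2}f_{true}\zav{-4\alpha^2t^2(1-t^2)}$ where $f_{true}={}_2F_1\zav{\nadsebou{\frac54\quad\frac34}{\frac32};\cdot}$ --- exactly your representation, obtained there from $(1+\sqrt{1+z})^{-1/2}=2^{-1/2}\,{}_2F_1\zav{\nadsebou{\frac14\quad\frac34}{\frac32};-z}$ followed by an Euler transform, rather than from the algebraic evaluation of ${}_2F_1\zav{\nadsebou{a\quad a+\frac12}{\frac32};\zeta^2}$ at $\zeta=\imag\sqrt z$ --- and then invokes Proposition \ref{intrep2} to convert the $t$-integral into the hypergeometrization $\tfrac{1}{\sqrt2}f_{true}\zav{\hypzav{\nadsebou{1\quad \frac12}{\frac34\quad \frac54}}-\alpha^2}$, a ${}_4F_3$ whose upper parameters $\frac54,\frac34$ cancel against the lower $\frac34,\frac54$, leaving ${}_2F_1\zav{\nadsebou{1\quad\frac12}{\frac32};-\alpha^2}=\arctan(\alpha)/\alpha$. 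Your term-by-term Beta integrals on $[0,\infty)$ are precisely Proposition \ref{intrep2} computed by hand (they coincide under that same substitution), and your duplication-formula cancellation $(3/2)_{2k}=4^k(3/4)_k(5/4)_k$ is precisely that ${}_4F_3\to{}_2F_1$ collapse. What your version buys is self-containedness and an explicit (if slightly misstated --- the terms alternate in sign, so appeal to absolute summability of $\sum\alpha^{2k}/(2k+1)$ rather than monotonicity) justification of swapping sum and integral, which the paper leaves implicit; what the paper's packaging buys is reuse, since the same hypergeometrization evaluated on other $f_\alpha$ yields $I_{true}$, $I_\alpha$ and the whole of Proposition \ref{Ialpha} without redoing any Beta integrals.
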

Integral $I_{true}$ is a special case of this for $\alpha=\frac{1}{\sqrt{3}}$.

Unfortunately, we too were unable to evaluate the integral $I_{\frac12}$ itself. We are only able to represent it as a value of a two-variable hypergeometric function:
$$
I_\frac12=\frac{1}{\sqrt{2}} \tilde F_1\zav{\nadsebou{1\quad \frac12}{\frac34\quad \frac54};\nadsebou{\frac{1}{4}\quad \frac{3}{4}}{\frac32}\nadsebou{\frac{1}{4}}{-};-\frac13,-\frac13},
$$
where
$$
\tilde F_1\zav{\nadsebou{\alpha_1\quad \alpha_2}{\gamma_1\quad \gamma_2};\nadsebou{a_1\quad a_2}{c}\nadsebou{b}{-};tx,ty}:=\sum_{j,k=0}^{\infty}\frac{(\alpha_1)_{j+k}(\alpha_2)_{j+k}}{(\gamma_1)_{j+k}(\gamma_2)_{j+k}}\frac{(a_1)_j(a_2)_j}{(c)_j j!}\frac{(b)_k}{k!}t^{j+k}x^jy^k,
$$
can be regarded as a generalization of Appell's $F_1$ function. Such a representation is of limited use, however.

Natural thing to do is to look for similar integrals to see if they are any more approachable than the original problem.  
We are thus going to concentrate (as an illustration of what our method can and cannot do) on the family of integrals:
$$
I_\alpha:=\inte{0}{\infty}\frac{1}{\zav{1+x^2}^{\frac32}}\zav{\varphi(x)+\sqrt{\varphi(x)}}^{-\alpha}{\rm d}x,\qquad \varphi(x):=1+\frac43\frac{x^2}{(1+x^2)^2},
$$
that can be also represented by the same $\tilde F_1$ function as $I_\frac12$, specifically:
$$
I_\alpha=2^{-\alpha} \tilde F_1\zav{\nadsebou{1\quad \frac12}{\frac34\quad \frac54};\nadsebou{\frac{\alpha}{2}\quad \frac{\alpha+1}{2}}{\alpha+1}\nadsebou{\frac{\alpha}{2}}{-};-\frac13,-\frac13}.
$$

We are going to show that this is a tough family of integrals, indeed. For no value of $\alpha$ (save for $\alpha=0$ which is trivial) we were able to evaluate $I_\alpha$ in terms of elementary function and $\Gamma$ function and
only for the values $I_0,I_1,I_{-1},I_{-2},[\alpha] I_{\alpha}$ we were able to evaluate in terms of single-variable hypergeometric function.
\begin{proposition}\label{Ialpha} Let $I_\alpha$ be as above. Then:
\begin{align}
I_0&=1,\\
I_1&=\frac12\!\! \ _4 F_3\zav{\nadsebou{1\quad \frac12\quad\frac32\quad 1}{2\quad \frac34\quad \frac54};-\frac13}=-\frac38 \hzav{\epsilon} \!\! \ _3 F_2\zav{\nadsebou{\epsilon \quad -\frac12\quad\frac12}{ -\frac14\quad \frac14};-\frac13},\\
I_{-1}&=\!\!\ _3 F_2\zav{\nadsebou{1\quad \frac12\quad -\frac12}{\frac34\quad \frac54};-\frac13}+\frac{53}{45},\\
I_{-n}&=\sum_{k=0}^{n}\zav{\nadsebou{n}{k}}\!\! \ _3 F_2\zav{\nadsebou{-\frac{n+k}{2}\quad 1\quad \frac12}{\frac34\quad \frac54};-\frac13},\\
I_2&=\frac{3\sqrt{2}}{8}\arctan\sqrt{2}+\frac{\sqrt{6}}{16}\ln\zav{5+2\sqrt{6}}-\frac34 \!\! \ _4
 F_3\zav{\nadsebou{1\quad 1\quad \frac12\quad \frac52}{\frac34\quad \frac54\quad 3};-\frac13},\\
\hzav{\epsilon}I_\epsilon&=\ln\frac12+2-\frac{\sqrt{2}}{2}\arctan\sqrt{2}-\frac{\sqrt{6}}{4}\ln\zav{5+2\sqrt{6}}-\frac{2}{45}\!\! \ _4 F_3\zav{\nadsebou{1\quad 1\quad \frac32\quad \frac32}{2\quad\frac74\quad \frac94};-\frac13}.
\end{align}
\end{proposition}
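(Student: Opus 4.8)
The plan is to funnel every case through a single elementary building block. For any exponent $p$ I would first prove
$$
J(p):=\inte{0}{\infty}\frac{\varphi(x)^p}{\zav{1+x^2}^{\frac32}}{\rm d}x=\!\! \ _3 F_2\zav{\nadsebou{-p\quad 1\quad \frac12}{\frac34\quad \frac54};-\frac13},
$$
by expanding $\varphi^p=\zav{1+\frac43\frac{x^2}{(1+x^2)^2}}^p$ in a binomial series and integrating term by term. Each term is the Beta integral $\inte{0}{\infty}\frac{x^{2m}}{(1+x^2)^{2m+\frac32}}{\rm d}x=\frac12 B\zav{m+\frac12,m+1}$, and Legendre's duplication formula turns the resulting quotient of $\Gamma$ functions into the Pochhammer ratio $\frac{(\frac12)_m(1)_m}{(\frac34)_m(\frac54)_m}$ together with the argument $-\frac13$. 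Since $(0)_m=0$ for $m\ge1$, the case $p=0$ gives $J(0)=1=I_0$ at once.

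The negative-integer cases need only $J$ and the binomial theorem. Writing $\varphi+\sqrt\varphi=\sqrt\varphi\zav{\sqrt\varphi+1}$ gives $\zav{\varphi+\sqrt\varphi}^{n}=\sum_{k=0}^{n}\comb{n}{k}\varphi^{\frac{n+k}{2}}$, so integrating against $\zav{1+x^2}^{-\frac32}$ yields
$$
I_{-n}=\sum_{k=0}^{n}\comb{n}{k}J\zav{\tfrac{n+k}{2}}=\sum_{k=0}^{n}\comb{n}{k}\!\! \ _3 F_2\zav{\nadsebou{-\frac{n+k}{2}\quad 1\quad \frac12}{\frac34\quad \frac54};-\frac13},
$$
which is the claimed formula. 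For $I_{-1}$ the $k=1$ summand has top parameter $-1$, hence terminates after two terms and sums to $\frac{53}{45}$, leaving the single $\!\! \ _3 F_2$ displayed; the same termination disposes of the integer-numerator terms of $I_{-2}$.

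The first genuinely two-dimensional case is $I_1$. Starting from the $\tilde F_1$ representation and collecting the double sum by total degree $n=j+k$, the inner sum over $j$ becomes a terminating series that I would rewrite, via the reflections $(a)_{n-j}=\frac{(-1)^j(a)_n}{(1-a-n)_j}$ and $(n-j)!^{-1}=\frac{(-1)^j(-n)_j}{n!}$, as
$$
\sum_{j=0}^{n}\frac{(\frac{\alpha}{2})_j(\frac{\alpha+1}{2})_j}{(\alpha+1)_j\,j!}\frac{(\frac{\alpha}{2})_{n-j}}{(n-j)!}=\frac{(\frac{\alpha}{2})_n}{n!}\!\! \ _3 F_2\zav{\nadsebou{-n\quad \frac{\alpha}{2}\quad \frac{\alpha+1}{2}}{\alpha+1\quad 1-\frac{\alpha}{2}-n};1}.
$$
The key observation is that this $\!\! \ _3 F_2(1)$ is Saalsch\"utzian \emph{exactly} when $\alpha=1$: then its two lower parameters sum to one more than its three upper ones, and Pfaff--Saalsch\"utz collapses the inner sum to $\frac{(\frac32)_n}{(2)_n}$. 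Substituting back into the outer sum and using $2^{-1}=\frac12$ produces precisely the $\!\! \ _4 F_3$ claimed. The alternative form $-\frac38\hzav{\epsilon}\!\! \ _3 F_2$ I would obtain separately from the differentiated-parameter calculus of Section \ref{S4}, as an independent cross-check rather than by re-integrating.

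The hard cases are $I_2$ and $\hzav{\epsilon}I_\epsilon$, where the inner $\!\! \ _3 F_2(1)$ is no longer balanced (for $\alpha=2$ the lower parameter $1-\frac{\alpha}{2}-n$ even degenerates to $-n$), so no single hypergeometric survives and an elementary remainder splits off. For $\hzav{\epsilon}I_\epsilon$ I would differentiate $I_\alpha=2^{-\alpha}\tilde F_1$ at $\alpha=0$: only $j=k=0$ survives at $\alpha=0$, and the derivative of $2^{-\alpha}$ contributes $-\ln 2=\ln\frac12$ (a reassuring match), while the derivatives of the vanishing Pochhammer factors $(\frac{\alpha}{2})_j$, $(\frac{\alpha}{2})_k$ produce differentiated-parameter single-variable series; these are then re-expressed, through known contiguous and reduction identities together with the algebraic structure of $\varphi$, as the residual $\!\! \ _4 F_3$ plus the elementary constants. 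For $I_2$ I would partial-fraction $\zav{\varphi+\sqrt\varphi}^{-2}=s^{-2}(s+1)^{-2}=-\frac2s+\frac1{s^2}+\frac2{s+1}+\frac1{(s+1)^2}$ in $s=\sqrt\varphi$: the $s^{-1},s^{-2}$ parts integrate through $J$ and simplify to the stated $\!\! \ _4 F_3$, while the $(s+1)^{-1},(s+1)^{-2}$ parts, rationalized by the conjugate $\sqrt\varphi-1$ and the factorization $\varphi=\frac{(3x^2+1)(x^2+3)}{3(1+x^2)^2}$, become integrals of algebraic functions. The main obstacle in both cases is exactly the evaluation of these algebraic integrals coming from the $1+\sqrt\varphi$ factor and their packaging into the precise closed forms $\arctan\sqrt{2}$ and $\ln\zav{5+2\sqrt6}$; once $J(p)$ is in hand the remaining hypergeometric bookkeeping is routine.
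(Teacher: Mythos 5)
Your building block $J(p)=\inte{0}{\infty}\varphi(x)^p(1+x^2)^{-\frac32}{\rm d}x={}_3F_2\zav{\nadsebou{-p\quad 1\quad \frac12}{\frac34\quad \frac54};-\frac13}$ is correct (the Beta-integral/duplication computation checks out, and it is exactly the paper's hypergeometrization $\zav{1-x}^{p}\mapsto{}_1F_0$ after the substitution $t=x/\sqrt{1+x^2}$), and it settles $I_0$, $I_{-n}$ and $I_{-1}$ in essentially the same way the paper does. For $I_1$ your route is genuinely different and valid: the paper simply notes that Euler's transformation collapses $f_1(x)=(1-x)^{-\frac12}\,{}_2F_1\zav{\nadsebou{\frac12\quad 1}{2};x}$ into the single function ${}_2F_1\zav{\nadsebou{\frac32\quad 1}{2};x}$ and then hypergeometrizes, while you resum the $\tilde F_1$ diagonal and invoke Pfaff--Saalsch\"utz; your observation that the inner ${}_3F_2(1)$ is balanced precisely when $\alpha=1$ is a nice structural explanation of why that case degenerates, at the cost of more computation than the paper's one line.

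The genuine gap is in the two hard cases, $I_2$ and $\hzav{\epsilon}I_\epsilon$, which is where all the content of the proposition lives (the constants $\arctan\sqrt2$ and $\ln\zav{5+2\sqrt6}$). First, your assignment of pieces in the partial-fraction scheme for $I_2$ is wrong: the $s^{-1},s^{-2}$ parts give $-2J\zav{-\frac12}+J(-1)$, i.e.\ two ${}_3F_2$'s, of which $J(-1)={}_3F_2\zav{\nadsebou{1\quad 1\quad \frac12}{\frac34\quad \frac54};-\frac13}$ is one of the terms the paper converts to \emph{elementary} form via $\Re\,{}_2F_1\zav{\nadsebou{1\quad 1}{\frac32};\frac{\imag}{\sqrt3}}$, while $J\zav{-\frac12}=\sqrt3\inte{0}{1}\zav{(3-2t^2)(1+2t^2)}^{-\frac12}{\rm d}t$ is a complete elliptic integral that does not appear in the final answer; neither is the stated ${}_4F_3$, whose extra parameter pair $\nadsebou{\frac52}{3}$ signals the second-order Taylor remainder of $\varphi^{-\frac12}$, not $J$. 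So the elliptic contributions must cancel between your four pieces before anything resembling the claimed formula emerges, and your sketch does not show this. Second, the actual extraction of $\arctan\sqrt2$ and $\ln\zav{5+2\sqrt6}$ --- which you defer to ``known contiguous and reduction identities'' and ``algebraic integrals'' --- is precisely the step the paper has to work for: it decomposes $f_2$ and $\hzav{\alpha}f_\alpha$ by the Taylor-remainder identity (\ref{Taylrem}) into ${}_1F_0$ and low-parameter ${}_2F_1$ pieces, hypergeometrizes each, reduces the resulting ${}_3F_2\zav{\cdot;-\frac13}$'s to $\Re\,{}_2F_1\zav{\nadsebou{1\quad a}{c};\frac{\imag}{\sqrt3}}$ via (\ref{aidfor}), and evaluates those by Pfaff transforms and elementary antiderivatives of $\frac{z^4}{(1+z^2)^m}$. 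Without that (or an equivalent) mechanism, your argument establishes the easy identities but not $I_2$ or $\hzav{\epsilon}I_\epsilon$.
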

This will be shown in Section \ref{S5}.

The problem of integration is perhaps \textit{the} oldest problem of modern mathematical analysis, yet it still attracts attention of many researchers (\cite{mobref1},\cite{mobref2},\cite{mobref3} and, of course, Method of brackets \cite{mob}) and it is a surprisingly active area.

\section{hypergeometrization}\label{S2}
\begin{definition}
Let $f$ be a function holomorphic near $0$, $1-c\not \in \mathbb{N}$ then its \textit{hypergeometrization} is defined by the infinite series:
$$
f\zav{\hypzav{\nadsebou{a}{c}}x}:=\sum_{k=0}^{\infty}\frac{f^{(k)}(t)}{k!}\frac{(a)_k}{(c)_k}x^k,
$$
where $(a)_k=a(a+1)(a+2)\cdots (a+k-1)$ is the Pochhammer symbol.
\end{definition} 
\begin{remark}
\ 
\begin{itemize}
\item{Obviously}
$$
f\zav{\nadsebou{a}{a};x}=f\zav{x}.
$$
\item{For $a=-n$, $n+1\in \mathbb{N}$, the series terminates and the function
$$
f\zav{\nadsebou{-n}{c};x}=\sum_{k=0}^{n}\frac{f^{(k)}(0)}{k!}\frac{(-n)_k}{(c)_k}x^k,
$$
is a polynomial in $x$.
}
\item{ Taylor series $f\zav{x}$ around $x$ and $f\zav{\nadsebou{a}{c};x}$ converges in the same disk.}
\item{We will write iterative application of hypergeometrization in a more compact way:
$$
f\zav{\hypzav{\nadsebou{a\quad b}{c\quad d}}x}:=f\zav{\hypzav{\nadsebou{a}{c}}\hypzav{\nadsebou{b}{d}}x},
$$}
and so on. 
\item{Hypergeometrization can undo itself, i.e.}
$$
f\zav{\hypzav{\nadsebou{a}{c}\nadsebou{c}{a}}x}=f(x).
$$
\item{The resulting function does not depend on the order of operation:}
$$
f\zav{\hypzav{\nadsebou{a}{c}\nadsebou{b}{d}}x}=f\zav{\hypzav{\nadsebou{b}{c}\nadsebou{a}{d}}x}=f\zav{\hypzav{\nadsebou{a}{d}\nadsebou{b}{c}}x}=f\zav{\hypzav{\nadsebou{b}{d}\nadsebou{a}{c}}x}.
$$
\item{From the property of Pochhammer symbol: 
$$
(a)_{2k}=\zav{\frac{a}{2}}_k\zav{\frac{a+1}{2}}_k 4^{k},
$$}
we can see that for $f(x)=g(x^2),$ it holds
\begin{equation}\label{Pochsqr}
f\zav{\hypzav{\nadsebou{a}{c}}x}=g\zav{\hypzav{\nadsebou{\frac{a}{2}\quad \frac{a+1}{2}}{\frac{c}{2}\quad \frac{c+1}{2}}}x^2}.
\end{equation}
\item{Generally, for $n\in \mathbb{N}$ it holds
$$
(a)_{nk}=\zav{\frac{a}{n}}_k\zav{\frac{a+1}{n}}_k\cdots \zav{\frac{a+n-1}{n}}_k n^{nk},
$$}
and thus for $f(x)=g(x^n)$ we have
$$
f\zav{\hypzav{\nadsebou{a}{c}}x}=g_0\zav{\hypzav{\nadsebou{\frac{a}{n}\quad \frac{a+1}{n}\dots \frac{a+n-1}{n}}{\frac{c}{n}\quad \frac{c+1}{n}\dots \frac{c+n-1}{n}}}x^n}.
$$
%
\end{itemize}

\end{remark}

\bigskip
We can represent many special functions as a hypergeometrization of elementary functions:
\begin{align}
\intertext{Confluent hypergeometric function from $f(x):=e^x$:}
f\zav{\hypzav{\nadsebou{a}{c}}x}&=:\!\! \ _1 F_1\zav{\nadsebou{a}{c};x}.\\
\intertext{Gauss's hypergeometric function from $f(x):=(1-x)^{-b}$:}
f\zav{\hypzav{\nadsebou{a}{c}}x}&=:\!\! \ _2 F_1\zav{\nadsebou{a\quad b}{c};x}. \label{gauss}\\
\intertext{Bessel function of the first kind from $f(x):=\cos(2\sqrt{x})=\sum\limits_{k=0}^{\infty}\frac{(-4x)^k}{(2k)!}$:}
f\zav{\hypzav{\nadsebou{\frac12}{c}}x}&=: \!\! \ _0 F_1\zav{\nadsebou{-}{c};-x}=\Gamma(c)x^{\frac{1-c}{2}} J_{c-1}(2\sqrt{x}).\\
\intertext{Generelized hyp. function from $f(x):=\frac13\zav{e^{3\sqrt[3]{x}}+2e^{-\frac32\sqrt[3]{x}}\cos\zav{\frac{\sqrt{3}3}{2}\sqrt[3]{x}}}=\sum\limits_{k=0}^{\infty}\frac{3^{3k}x^k}{(3k)!}$:}
f\zav{\hypzav{\nadsebou{\frac13\quad \frac23}{c\quad d}}x}&=: \!\! \ _0 F_2\zav{\nadsebou{-}{c\quad d};x}.
\end{align}
In fact, \textit{any} generalized hypergeometric function $\!\! \ _p F_q$ for $p\leq q+1 $ can be constructed this way. Since the hypergeometrization does not change the region of convergence, we can see at once from this construction that the series $\!\! \ _{q+1} F_q$ converges in the unit disk (since those functions originated from $(1-x)^{-b}$) and the rest $\!\! \ _{p} F_q$ $(p\leq q)$ converges everywhere since they are constructed from entire functions like $e^x,\cosh(2\sqrt{x})$ etc.

Similarly, we can represent some multi-variable hypergeometic function like Appell's function: 
\begin{align}
\intertext{Appell's $F_1$ function from $f(t):=(1-x t)^{-b_1}(1-y t)^{-b_2}$:}
 f\zav{\hypzav{\nadsebou{a}{c}}t}&=:F_1 \zav{\nadsebou{a}{c};\nadsebou{b_1\quad b_2}{-};xt,yt}=\sum_{j,k=0}^{\infty}\frac{(a)_{j+k}}{(c)_{j+k}}\frac{(b_1)_j(b_2)_k}{j!k!}t^{j+k}x^j y^k.\label{F1}\\
\intertext{Appell's $F_2$ function from $f(x):=g_x\zav{\hypzav{\nadsebou{b_2}{c_2}}y},\ g_x(y):=(1-x-y)^{-a}$:}
f\zav{\hypzav{\nadsebou{b_1}{c_1}}x}&:= F_2\zav{\nadsebou{a}{-};\nadsebou{b_1}{c_1}\nadsebou{b_2}{c_2};x,y}=\sum_{j,k=0}^{\infty}\frac{(a)_{j+k}}{j!k!}\frac{(b_1)_j(b_2)_k}{(c_1)_j(c_2)_k}x^j y^k. \label{F2}
\end{align} 
And so on.

Once again, we can retrieve the information about the regions of convergence for Appell's series from their elementary origins. Since the hypergeometrization does not change the radius of convergence, we can deduce from the fact that
$$
(1-x)^{-b_1}(1-y)^{-b_2}=\sum_{j,k=0}^{\infty}\frac{(b_1)_j(b_2)_k}{j!k!}x^j y ^k<\infty \qquad \Leftarrow \qquad \abs{x}<1,\abs{y}<1,
$$
that the same is true for $F_1$ function.

A similar argument lead us to the conclusion that the series for $F_2$ converges in the region $\abs{x+y}<1$. 

This trick is, essentially, \textit{Horn's principle} in reverse. (Horn's principle states that the region of convergence of any hypergeometric function does not depend on the specific values of parameters -- safe for some exceptional pathological values, like negative integers and so on.)

For what follows, we are going to define the function $\tilde F_1$ which is not a member of Appell's family not even appears on Horn's list but it can be thought of as a generalization of $F_1$ function:
\begin{definition} Let 
$$
f(t):=\!\! \ _2 F_1\zav{\nadsebou{a_1\quad a_2}{c};x t}(1-y t)^{-b}.
$$
Then
\begin{equation}\label{tildeF1}
f\zav{\hypzav{\nadsebou{\alpha_1\quad \alpha_2}{\gamma_1\quad \gamma_2}}t}=:\tilde F_1\zav{\nadsebou{\alpha_1\quad \alpha_2}{\gamma_1\quad \gamma_2};\nadsebou{a_1\quad a_2}{c}\nadsebou{b}{-};tx,ty}=\sum_{j,k=0}^{\infty}\frac{(\alpha_1)_{j+k}(\alpha_2)_{j+k}}{(\gamma_1)_{j+k}(\gamma_2)_{j+k}}\frac{(a_1)_j(a_2)_j}{(c)_j j!}\frac{(b)_k}{k!}t^{j+k}x^jy^k.
\end{equation}
\end{definition}
Since the Taylor series $f(t)$ from above converges for $\abs{xt}<1,\abs{yt}<1$ the same can be said about $\tilde F_1$.

Later we are going to show that the integral $I_\alpha$ can be written in terms of $\tilde F_1$.

\subsection{Properties}
Hypergeometrization appears naturally in multiple settings. It can used to described the remainder of a function in Taylor expansion:
\begin{proposition}
\begin{equation}\label{Taylrem}
f(x)=\sum_{k=0}^{n-1}f^{(k)}(t)\frac{x^k}{k!}+\frac{x^n}{n!} f^{(n)}\zav{\hypzav{\nadsebou{1}{n+1}}x}.
\end{equation}
\end{proposition}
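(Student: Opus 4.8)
The plan is to expand the right-hand side via the definition of hypergeometrization and watch it collapse into the ordinary Taylor series of $f$ at the origin (so I read the base point in the definition as $0$, consistently with the $f(x)$ appearing on the left). First I would apply the definition to the hypergeometrized term, using that the $k$-th derivative of $f^{(n)}$ is $f^{(n+k)}$:
\[
f^{(n)}\zav{\hypzav{\nadsebou{1}{n+1}}x}=\sum_{k=0}^{\infty}\frac{f^{(n+k)}(0)}{k!}\frac{(1)_k}{(n+1)_k}x^k.
\]

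The engine of the whole argument is a single Pochhammer identity. Since $(1)_k=k!$ and $(n+1)_k=(n+k)!/n!$, the ratio simplifies to $\frac{(1)_k}{(n+1)_k}=\frac{n!\,k!}{(n+k)!}$. Substituting this and multiplying through by $\frac{x^n}{n!}$ lets the factorials telescope into a clean form:
\[
\frac{x^n}{n!}f^{(n)}\zav{\hypzav{\nadsebou{1}{n+1}}x}=\sum_{k=0}^{\infty}f^{(n+k)}(0)\frac{x^{n+k}}{(n+k)!}.
\]

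Next I would reindex the sum by $m=n+k$, turning the right-hand side into $\sum_{m=n}^{\infty}\frac{f^{(m)}(0)}{m!}x^m$ --- precisely the tail, from degree $n$ onward, of the Taylor series of $f$ at $0$. Adding the explicitly written head $\sum_{k=0}^{n-1}\frac{f^{(k)}(0)}{k!}x^k$ reassembles the full Taylor series $\sum_{k=0}^{\infty}\frac{f^{(k)}(0)}{k!}x^k=f(x)$, which is exactly \eqref{Taylrem}.

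I do not expect a genuine obstacle: the content is really the observation that the factors $(1)_k/(n+1)_k$ are manufactured precisely to convert $f^{(n+k)}(0)/k!$ into the Taylor coefficient $f^{(n+k)}(0)/(n+k)!$. The one point deserving a sentence is convergence. By the earlier remark that hypergeometrization leaves the radius of convergence unchanged, the series for $f^{(n)}\zav{\hypzav{\nadsebou{1}{n+1}}x}$ converges on the same disk as the Taylor series of $f^{(n)}$, which --- differentiation not shrinking the radius --- is the disk on which $f$ is holomorphic. There every series in sight converges absolutely, so the term-by-term rewriting and the shift of summation index are both justified.
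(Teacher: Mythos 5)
Your proof is correct; the paper offers no proof of this proposition (it is listed among the facts declared ``easy to derive''), and your direct expansion using $(1)_k/(n+1)_k = n!\,k!/(n+k)!$ to turn $f^{(n+k)}(0)/k!$ into the Taylor coefficient $f^{(n+k)}(0)/(n+k)!$ is exactly the intended verification. Your decision to read the stray $t$ in the definition and in the statement as the base point $0$ is also the right call, consistent with how the hypergeometrization is written everywhere else in the paper.
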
 
It can be used to handle differentiation
\begin{proposition}
$$
\partial_x x^\beta f\zav{x^\alpha}=\beta x^{\beta-1} f\zav{\hypzav{\nadsebou{1+\frac{\beta}{\alpha}}{\frac{\beta}{\alpha}}} x^\alpha},\qquad \beta\not=0,
$$
\end{proposition}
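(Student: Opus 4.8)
The plan is to reduce the identity to a direct manipulation of Taylor coefficients, since both sides are, up to the prefactors $x^\beta$ and $x^{\beta-1}$, power series in $x^\alpha$, and the hypergeometrization is defined coefficientwise. First I would expand $f$ about the origin, writing $f(x^\alpha)=\sum_{k=0}^\infty \frac{f^{(k)}(0)}{k!}x^{\alpha k}$, which is legitimate because $f$ is holomorphic near $0$; multiplying by $x^\beta$ gives $x^\beta f(x^\alpha)=\sum_{k=0}^\infty \frac{f^{(k)}(0)}{k!}x^{\alpha k+\beta}$.

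Next I would differentiate the series term by term, valid within the disk of convergence, to obtain $\partial_x x^\beta f(x^\alpha)=\sum_{k=0}^\infty \frac{f^{(k)}(0)}{k!}(\alpha k+\beta)x^{\alpha k+\beta-1}$. Factoring out $\beta x^{\beta-1}$ and collecting $x^{\alpha k}=(x^\alpha)^k$ into the summand leaves the coefficient $\frac{\alpha k+\beta}{\beta}$ multiplying $\frac{f^{(k)}(0)}{k!}(x^\alpha)^k$.

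The one substantive step, which I would present carefully, is recognizing this coefficient as the Pochhammer ratio required by the hypergeometrization. Setting $c=\frac{\beta}{\alpha}$ and $a=1+\frac{\beta}{\alpha}=c+1$, the ratio $\frac{(a)_k}{(c)_k}=\frac{(c+1)_k}{(c)_k}$ telescopes: every factor cancels except the last of the numerator and the first of the denominator, giving $\frac{c+k}{c}=\frac{\alpha k+\beta}{\beta}$. Hence the coefficient matches exactly, and comparing with the definition of hypergeometrization yields $\partial_x x^\beta f(x^\alpha)=\beta x^{\beta-1} f\zav{\hypzav{\nadsebou{1+\frac{\beta}{\alpha}}{\frac{\beta}{\alpha}}} x^\alpha}$, as claimed.

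I expect no genuine obstacle here. The hypothesis $\beta\neq 0$ is precisely what makes the factored coefficient $\frac{\alpha k+\beta}{\beta}$ well defined, while $\alpha\neq 0$ is implicit in the expression $f(x^\alpha)$, so that $c=\beta/\alpha$ is meaningful. The only points deserving a word of justification are the termwise differentiation, which is immediate from holomorphy, and the observation that the telescoped ratio is well defined for every $k$ even when the individual symbol $(c)_k$ might vanish, so that no additional restriction on $\beta/\alpha$ is needed beyond those stated.
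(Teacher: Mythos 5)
Your proof is correct: the termwise differentiation and the telescoping identity $\frac{(c+1)_k}{(c)_k}=\frac{c+k}{c}$ with $c=\beta/\alpha$ give exactly the claimed coefficient $\frac{\alpha k+\beta}{\beta}$, and your remark that $\beta\neq 0$ is precisely the hypothesis needed is on point. The paper omits the proof entirely (``All of these facts are easy to derive''), but this coefficientwise Pochhammer computation is exactly the argument it has in mind --- it mirrors the paper's own proof of Proposition \ref{HFFTC} --- so you have taken essentially the same route.
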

and for $\alpha=1$ even repeated differentiation:
\begin{proposition}
$$
\frac{\partial^n_x}{n!} x^\beta f\zav{ x}=\zav{\nadsebou{\beta}{n}} x^{\beta-n} f\zav{\hypzav{\nadsebou{1+\beta}{1+\beta-n}} x}.
$$
\end{proposition}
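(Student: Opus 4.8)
The plan is to prove the identity by expanding $f$ in its Taylor series about the origin, differentiating term by term, and then matching coefficients against the claimed hypergeometrization.

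First I would write $f(x)=\sum_{k=0}^{\infty}\frac{f^{(k)}(0)}{k!}x^k$, so that $x^\beta f(x)=\sum_{k=0}^{\infty}\frac{f^{(k)}(0)}{k!}x^{\beta+k}$. Since $f$ is holomorphic near $0$ the series converges on a disk and may be differentiated term by term there. Applying $\partial_x^n$ to each power gives $\partial_x^n x^{\beta+k}=(\beta+k)(\beta+k-1)\cdots(\beta+k-n+1)\,x^{\beta+k-n}$, a product of exactly $n$ linear factors (so no Gamma-function singularities arise, even for non-integer $\beta$). Factoring out $x^{\beta-n}$ leaves
$$
\frac{\partial_x^n}{n!}x^\beta f(x)=x^{\beta-n}\sum_{k=0}^{\infty}\frac{f^{(k)}(0)}{k!}\,\frac{(\beta+k)(\beta+k-1)\cdots(\beta+k-n+1)}{n!}\,x^k.
$$

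The heart of the proof is the purely algebraic identity
$$
\frac{(\beta+k)(\beta+k-1)\cdots(\beta+k-n+1)}{n!}=\zav{\nadsebou{\beta}{n}}\frac{(1+\beta)_k}{(1+\beta-n)_k},
$$
valid for every $k\geq 0$. I would establish it by recognizing the numerator as a shifted falling factorial and computing the ratio $(\beta+k)^{\underline{n}}/\beta^{\underline{n}}=(1+\beta)_k/(1+\beta-n)_k$ directly from the telescoping of consecutive linear factors. Substituting this identity term by term turns the sum into $\zav{\nadsebou{\beta}{n}}x^{\beta-n}\sum_{k}\frac{f^{(k)}(0)}{k!}\frac{(1+\beta)_k}{(1+\beta-n)_k}x^k$, which is exactly $\zav{\nadsebou{\beta}{n}}x^{\beta-n}f\zav{\hypzav{\nadsebou{1+\beta}{1+\beta-n}}x}$ by the definition of hypergeometrization.

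An alternative route I would keep in reserve is induction on $n$: the base case $n=1$ is the preceding single-derivative proposition with $\alpha=1$, and the inductive step applies that proposition to $x^{\beta-n}g(x)$ with $g=f\zav{\hypzav{\nadsebou{1+\beta}{1+\beta-n}}x}$, producing a further hypergeometrization $\hypzav{\nadsebou{1+\beta-n}{\beta-n}}$; composing the two hypergeometrizations telescopes, since $\frac{(1+\beta)_k}{(1+\beta-n)_k}\cdot\frac{(1+\beta-n)_k}{(\beta-n)_k}=\frac{(1+\beta)_k}{(\beta-n)_k}$, into $\hypzav{\nadsebou{1+\beta}{1+\beta-(n+1)}}$, while the scalars combine as $\zav{\nadsebou{\beta}{n}}\frac{\beta-n}{n+1}=\zav{\nadsebou{\beta}{n+1}}$. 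The only points requiring care — and the closest thing to an obstacle — are bookkeeping ones: keeping the generalized binomial coefficient $\zav{\nadsebou{\beta}{n}}$ honest for arbitrary $\beta$ (hence my preference for finite products of linear factors over Gamma quotients), and ensuring the target hypergeometrization is well-defined, i.e. that $c=1+\beta-n$ avoids the forbidden values $1-c\in\mathbb{N}$ so that no denominator $(1+\beta-n)_k$ vanishes.
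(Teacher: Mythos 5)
Your proof is correct. Note that the paper itself offers no argument for this proposition (it is one of the facts dismissed with ``All of these facts are easy to derive''), and your term-by-term Taylor computation — reducing everything to the finite-product identity $\frac{1}{n!}(\beta+k)(\beta+k-1)\cdots(\beta+k-n+1)=\binom{\beta}{n}\frac{(1+\beta)_k}{(1+\beta-n)_k}$ — is exactly the intended ``easy derivation,'' with the added care (finite linear factors rather than Gamma quotients, and the exclusion $n-\beta\in\mathbb{N}$ so that $(1+\beta-n)_k$ never vanishes) that the paper leaves implicit.
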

We can also represent the result of various definite integrals. First:
\begin{proposition}
For $\Re(c)>\Re(a)>0$ it holds
$$
\inte{0}{1}s^{a-1}(1-s)^{c-a-1}f(s x){\rm d}s=\frac{\Gamma(c-a)\Gamma(a)}{\Gamma(c)}f\zav{\hypzav{\nadsebou{a}{c}} x}.
$$
\end{proposition}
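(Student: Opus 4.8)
The plan is to reduce the identity to the classical Euler beta integral by expanding $f$ into its Taylor series and integrating termwise. Since $f$ is holomorphic near $0$, I would write $f(sx)=\sum_{k=0}^{\infty}\frac{f^{(k)}(0)}{k!}s^k x^k$, valid for $\abs{sx}<R$, where $R$ is the radius of convergence of $f$ at the origin. Substituting this expansion into the left-hand side and exchanging the order of summation and integration gives
$$
\inte{0}{1}s^{a-1}(1-s)^{c-a-1}f(sx){\rm d}s=\sum_{k=0}^{\infty}\frac{f^{(k)}(0)}{k!}x^k\inte{0}{1}s^{a+k-1}(1-s)^{c-a-1}{\rm d}s.
$$

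Next I would evaluate the inner integral as a beta function. Under the hypotheses $\Re(c)>\Re(a)>0$ we have $\Re(a+k)>0$ and $\Re(c-a)>0$ for every $k\ge 0$, so
$$
\inte{0}{1}s^{a+k-1}(1-s)^{c-a-1}{\rm d}s=\frac{\Gamma(a+k)\Gamma(c-a)}{\Gamma(c+k)}.
$$
Using $\Gamma(a+k)=(a)_k\Gamma(a)$ and $\Gamma(c+k)=(c)_k\Gamma(c)$, this equals $\frac{\Gamma(a)\Gamma(c-a)}{\Gamma(c)}\frac{(a)_k}{(c)_k}$. Factoring the constant $\frac{\Gamma(c-a)\Gamma(a)}{\Gamma(c)}$ out of the sum leaves precisely $\sum_{k=0}^{\infty}\frac{f^{(k)}(0)}{k!}\frac{(a)_k}{(c)_k}x^k$, which by definition is $f\zav{\hypzav{\nadsebou{a}{c}}x}$. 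This completes the chain of equalities.

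The one step requiring care is the interchange of summation and integration, which I expect to be the main (though modest) obstacle. I would justify it by uniform convergence: for fixed $x$ with $\abs{x}<R$ and $s\in[0,1]$ one has $\abs{sx}\le\abs{x}<R$, so the partial sums of the Taylor series converge uniformly in $s$ on $[0,1]$; meanwhile the weight $s^{a-1}(1-s)^{c-a-1}$ is absolutely integrable on $[0,1]$ exactly because $\Re(a)>0$ (integrability at $s=0$) and $\Re(c-a)>0$ (integrability at $s=1$). Uniform convergence of an integrand against a fixed integrable weight licenses termwise integration, so no hypotheses beyond $\Re(c)>\Re(a)>0$ are needed. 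The resulting identity holds for $\abs{x}<R$; since both sides are analytic in $x$ there (the Pochhammer ratio $(a)_k/(c)_k$ grows only polynomially and hence does not shrink the radius of convergence, as already noted in the remarks), the identity persists by analytic continuation wherever both sides remain defined.
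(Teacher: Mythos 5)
Your proof is correct and is precisely the intended derivation: the paper offers no explicit argument for this proposition (it is listed among facts that are ``easy to derive''), and the termwise Euler beta integral computation, with the interchange justified by uniform convergence of the Taylor partial sums against the integrable weight $s^{a-1}(1-s)^{c-a-1}$, is the standard route the author clearly has in mind.
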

And second:
\begin{proposition}\label{intrep2}
For $\Re(a)>0,\Re(c)>0,\Re(\alpha)>0,$ it holds
$$
\inte{0}{1}s^{a-1}(1-s^\alpha)^{c-1}f\zav{x s^\alpha(1-s^\alpha)}{\rm d}s=\frac{\Gamma\zav{\frac{a}{\alpha}}\Gamma(c)}{\alpha\Gamma\zav{c+\frac{a}{\alpha}}} f\zav{\hypzav{\nadsebou{\frac{a}{\alpha}}{\frac{c}{2}+\frac{a}{2\alpha}} \nadsebou{c}{\frac{1+c}{2}+\frac{a}{2\alpha}}}\frac{x}{4}}.
$$
\end{proposition}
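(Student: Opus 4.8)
The plan is to reduce the identity to the term-by-term integration of the Taylor series of $f$. Writing $f(w)=\sum_{k=0}^{\infty}\frac{f^{(k)}(0)}{k!}w^k$ and substituting $w=x s^\alpha(1-s^\alpha)$, the left-hand side becomes, after formally exchanging summation and integration,
$$
\sum_{k=0}^{\infty}\frac{f^{(k)}(0)}{k!}x^k\inte{0}{1}s^{a-1+\alpha k}(1-s^\alpha)^{c-1+k}{\rm d}s.
$$
The exchange is legitimate because $s^\alpha(1-s^\alpha)\le\frac14$ on $[0,1]$ (the maximum being attained at $s^\alpha=\frac12$), so for $\abs{x}$ small enough the argument $x s^\alpha(1-s^\alpha)$ stays inside the disk of convergence of $f$ uniformly in $s$, and the Weierstrass $M$-test applies; the resulting identity then extends to the full disk by analytic continuation. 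This is also where the factor $\frac14$ in the final argument ultimately originates.

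Next I would evaluate the inner integral. The substitution $u=s^\alpha$ turns it into a Beta integral:
$$
\inte{0}{1}s^{a-1+\alpha k}(1-s^\alpha)^{c-1+k}{\rm d}s=\frac1\alpha\inte{0}{1}u^{\frac a\alpha+k-1}(1-u)^{c+k-1}{\rm d}u=\frac1\alpha\frac{\Gamma\zav{\frac a\alpha+k}\Gamma(c+k)}{\Gamma\zav{\frac a\alpha+c+2k}},
$$
where convergence of each Beta integral follows from $\Re(a)>0$, $\Re(c)>0$, $\alpha>0$. Using $\Gamma(\lambda+k)=\Gamma(\lambda)(\lambda)_k$ for $\lambda=\frac a\alpha$ and $\lambda=c$, together with $\Gamma\zav{\frac a\alpha+c+2k}=\Gamma\zav{\frac a\alpha+c}\zav{\frac a\alpha+c}_{2k}$, I can factor out the $k=0$ term $\frac1\alpha\frac{\Gamma\zav{\frac a\alpha}\Gamma(c)}{\Gamma\zav{\frac a\alpha+c}}$, leaving
$$
\frac{\Gamma\zav{\frac a\alpha}\Gamma(c)}{\alpha\Gamma\zav{c+\frac a\alpha}}\sum_{k=0}^{\infty}\frac{f^{(k)}(0)}{k!}x^k\frac{\zav{\frac a\alpha}_k(c)_k}{\zav{\frac a\alpha+c}_{2k}}.
$$

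Finally, I would split the double-index Pochhammer symbol by the duplication identity $(b)_{2k}=\zav{\frac b2}_k\zav{\frac{b+1}2}_k4^k$ noted in the Remark, taken with $b=\frac a\alpha+c$, giving $\zav{\frac a\alpha+c}_{2k}=\zav{\frac c2+\frac a{2\alpha}}_k\zav{\frac{1+c}2+\frac a{2\alpha}}_k4^k$. Absorbing the $4^k$ into $x^k$ as $\zav{\frac x4}^k$, the surviving series is exactly the double hypergeometrization
$$
\sum_{k=0}^{\infty}\frac{f^{(k)}(0)}{k!}\zav{\frac x4}^k\frac{\zav{\frac a\alpha}_k(c)_k}{\zav{\frac c2+\frac a{2\alpha}}_k\zav{\frac{1+c}2+\frac a{2\alpha}}_k}=f\zav{\hypzav{\nadsebou{\frac a\alpha}{\frac c2+\frac a{2\alpha}}\nadsebou{c}{\frac{1+c}2+\frac a{2\alpha}}}\frac x4},
$$
which, combined with the prefactor, is the claimed right-hand side. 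The one genuinely delicate point is the justification of the interchange of sum and integral together with the final analytic continuation in $x$ (and in the parameters); the algebraic heart of the argument is the single application of the Pochhammer duplication formula, which is precisely what produces the two lower parameters of the hypergeometrization and the argument $\frac x4$.
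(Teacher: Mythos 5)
Your proof is correct, and since the paper offers no proof of Proposition \ref{intrep2} beyond the remark that ``all of these facts are easy to derive,'' your derivation --- termwise integration of the Taylor series, reduction to a Beta integral via $u=s^\alpha$, and the Pochhammer duplication $(b)_{2k}=\zav{\tfrac b2}_k\zav{\tfrac{b+1}{2}}_k4^k$ --- is evidently the intended one. (A minor remark: since $s^\alpha(1-s^\alpha)\le\tfrac14$ gives the bound $\abs{x}/4$ uniformly in $s$, the dominated-convergence argument already works for all $x$ with $\abs{x}/4$ inside the disk of convergence of $f$, so the final analytic-continuation step is not actually needed.)
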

All of these facts are easy to derive.

But most importantly for our goal, it can be used to describe antiderivatives as shown in Proposition \ref{HFFTC}, which we are going to prove.
\begin{proof}
Proposition follows from the fact that
$$
\int x^{\alpha+\beta k}{\rm d}x=\frac{x^{\alpha+\beta k+1}}{\alpha+\beta k+1}=\frac{x^{\alpha+1}}{\alpha+1}\frac{\zav{\frac{\alpha+1}{\beta}}_k}{\zav{1+\frac{\alpha+1}{\beta}}_k }x^{\beta k}+c,\qquad k=0,1,2,\dots
$$
\end{proof} 
Thus, to compute the indefinite integral (\ref{HFFTCformula}) is equivalent to adding two Pochhammer symbols into the Taylor series expansion of $f$, one in the numerator and one in the denominator. These two symbols in addition differs by one.

In the rest of the paper we are going to explore how this is in any way helpful in various special cases of $f$.
\begin{remark}
The concept of hypergeometrization was introduced by the present author in \cite{B2}. It can be also understand as a Hadamard product (or convolution)
$$
 f\zav{\hypzav{\nadsebou{a}{c}}x}=\!\! \ _{2} F_1\zav{\nadsebou{a \quad 1}{c};x}\star f(x), 
$$
where the Hadamard product of the two formal power series $g(x)=\sum_{k\geq 0} g_k x^k$, $h(x)=\sum_{k\geq 0} h_k x^k$ is defined 
$$
g(x)\star h(x):=\sum_{k=0}^{\infty}g_k h_k x^k.
$$
Before \cite{B2}, a linear operator which brings a function to its Hadamard product with some hypergeometric function (i.e. to its hypergeometrization) appeared also in \cite{Carlson} and elsewhere. But hypergeometrization is a special case of Hadamard product, and has many properties the general Hadamard product does not posses.
\end{remark}
\section{Generalized hypergeometric functions}\label{S3}
The easiest case is when the Taylor series of $f$ contains nothing but Pochhammer symbols -- these functions are called ``generalized hypergeometric function'' $\!\! \ _p F_q$ defined for $p\leq q+1$ by the infinite series:
\begin{equation}\label{seriesdef}
\!\! \ _p F_q\zav{\nadsebou{a_1 \dots a_p}{c_1 \dots c_q};x}:=\sum_{k=0}^{\infty}\frac{(a_1)_k\cdots (a_p)_k}{(c_1)_k\cdots (c_q)_k}\frac{x^k}{k!},\qquad 1-c_i\not\in \mathbb{N}, \forall i.
\end{equation}
For this family of function Proposition \ref{HFFTC} (with  $f=\!\! \ _p F_q$) can be written in the following form: 

\begin{corollary}\label{prophyp} For $p\leq q+1$, $\alpha\not=-1,\beta\not=0$, $-\frac{\alpha+1}{\beta}\not \in \mathbb{N}$ it holds:
$$
\int x^\alpha \!\! \ _p F_q\zav{\nadsebou{a_1 \dots a_p}{c_1 \dots c_q};\gamma x^\beta}{\rm d}x=\frac{x^{\alpha+1}}{\alpha+1} \!\! \ _{p+1} F_{q+1}\zav{\nadsebou{a_1 \dots a_p\quad \frac{\alpha+1}{\beta}}{c_1 \dots c_q\quad 1+\frac{\alpha+1}{\beta}};\gamma x^\beta}+c.
$$
\end{corollary}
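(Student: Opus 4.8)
The plan is to read Corollary \ref{prophyp} as the specialization of Proposition \ref{HFFTC} to the case $f={}_pF_q$; the whole content is then exactly the remark already emphasized just after the proof of Proposition \ref{HFFTC}, namely that forming an antiderivative merely inserts two Pochhammer symbols (differing by one) into the Taylor series, which turns a ${}_pF_q$ into a ${}_{p+1}F_{q+1}$.

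First I would apply Proposition \ref{HFFTC} to the function
$$
f(y):={}_pF_q\zav{\nadsebou{a_1\dots a_p}{c_1\dots c_q};\gamma y},
$$
so that the integrand $x^\alpha\,{}_pF_q(\dots;\gamma x^\beta)$ is precisely $x^\alpha f(x^\beta)$. This $f$ is holomorphic near $0$ — entire when $p\le q$, and analytic on $|y|<1/|\gamma|$ when $p=q+1$ — so together with $\alpha\ne-1$, $\beta\ne0$ and $-\frac{\alpha+1}{\beta}\notin\mathbb{N}$ all hypotheses of Proposition \ref{HFFTC} are met, and it yields
$$
\int x^\alpha f(x^\beta)\,{\rm d}x=\frac{x^{\alpha+1}}{\alpha+1}\,f\zav{\hypzav{\nadsebou{\frac{\alpha+1}{\beta}}{1+\frac{\alpha+1}{\beta}}}x^\beta}+c.
$$

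Next I would unfold the hypergeometrization on the right via its defining series. By (\ref{seriesdef}) the $k$-th Taylor coefficient of $f$ at $0$ is $\frac{(a_1)_k\cdots(a_p)_k}{(c_1)_k\cdots(c_q)_k}\frac{\gamma^k}{k!}$, so multiplying it by the two new symbols $\zav{\frac{\alpha+1}{\beta}}_k$ and $\zav{1+\frac{\alpha+1}{\beta}}_k$ supplied by the hypergeometrization gives
$$
f\zav{\hypzav{\nadsebou{\frac{\alpha+1}{\beta}}{1+\frac{\alpha+1}{\beta}}}x^\beta}=\sum_{k=0}^{\infty}\frac{(a_1)_k\cdots(a_p)_k\,\zav{\frac{\alpha+1}{\beta}}_k}{(c_1)_k\cdots(c_q)_k\,\zav{1+\frac{\alpha+1}{\beta}}_k}\frac{(\gamma x^\beta)^k}{k!},
$$
and comparison with (\ref{seriesdef}) identifies the right-hand side as ${}_{p+1}F_{q+1}$ with the extra numerator parameter $\frac{\alpha+1}{\beta}$ and denominator parameter $1+\frac{\alpha+1}{\beta}$, which is the claimed formula.

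I do not expect a genuine obstacle; the only care needed is the admissibility bookkeeping. Since $p\le q+1$ we have $p+1\le(q+1)+1$, so ${}_{p+1}F_{q+1}$ is still an admissible generalized hypergeometric function in the sense of (\ref{seriesdef}); the new denominator parameter is admissible because $1-\zav{1+\frac{\alpha+1}{\beta}}=-\frac{\alpha+1}{\beta}\notin\mathbb{N}$ is exactly the stated hypothesis; and since hypergeometrization does not alter the radius of convergence, the series for ${}_{p+1}F_{q+1}(\dots;\gamma x^\beta)$ converges on the same disk as the original integrand, so the formal manipulation is an honest identity of analytic functions there. The substance of the statement is thus carried entirely by Proposition \ref{HFFTC}, and the corollary is little more than a change of notation.
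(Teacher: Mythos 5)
Your proposal is correct and follows the paper's own route: the paper presents this corollary precisely as Proposition \ref{HFFTC} specialized to $f={}_pF_q$, with the underlying computation being the insertion of the two Pochhammer symbols $\zav{\frac{\alpha+1}{\beta}}_k$ and $\zav{1+\frac{\alpha+1}{\beta}}_k$ into the Taylor coefficients. Your additional bookkeeping (holomorphy of $f$ near $0$, admissibility of the new lower parameter, preservation of the radius of convergence) is sound and consistent with the paper's remarks.
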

In other words, one cannot escape hypergeometric functions making antiderivatives.
This is very convenient since the sort of trouble we encounter with the logarithmic integrals cannot happened now.

Many elementary functions are actually special cases of hypergeometric function, the most simple examples (by number of parameters) are
\begin{align*}
\!\! \ _0 F_0\zav{\nadsebou{-}{-};x}&=e^x,\\
\!\! \ _1 F_0\zav{\nadsebou{a}{-};x}&=(1-x)^{-a},\\
\!\! \ _0 F_1\zav{\nadsebou{-}{\frac12};-\frac{x^2}{4}}&=\cos x,\\
\!\! \ _0 F_1\zav{\nadsebou{-}{\frac32};-\frac{x^2}{4}}&=\frac{\sin x}{x},\\
\!\! \ _1 F_2\zav{\nadsebou{1}{2\quad \frac32};-x^2}&=\zav{\frac{\sin x}{x}}^2,\\
\!\! \ _2 F_1\zav{\nadsebou{ a \quad 1-a}{\frac{1}{2}};-z^2}&=\frac{\zav{\sqrt{1+z^2}+z}^{2a-1}+\zav{\sqrt{1+z^2}-z}^{2a-1}}{2\sqrt{1+z^2}},\\
\!\! \ _2 F_1\zav{\nadsebou{\frac{1}{2}-s\quad 1-s}{\frac32};-t^2}&=\frac{\sin\zav{2s\ {\rm arctan }(t)}}{2st}(1+t^2)^{s},
\end{align*}
and the list goes on and on (see \cite{dlmf15.4}). In fact, sheer size of this list makes it difficult to just memorize it. Luckily, great number of other elementary function can be converted into its hypergeometric form using Corollary \ref{prophyp}! 

For example, since  $\sin x = \int \cos x{\rm d}x$ we have
$$
\sin x=\int \cos x{\rm d}x=\int\!\! \ _0 F_1\zav{\nadsebou{-}{\frac12};-x^2}{\rm d}x= x\!\! \ _1 F_2\zav{\nadsebou{\frac12}{\frac12\quad \frac32};-x^2}=x\!\! \ _0 F_1\zav{\nadsebou{-}{\frac32};-x^2}.
$$

Similarly
\begin{align*}
\int e^x{\rm d}x&=x\!\! \ _1 F_1\zav{\nadsebou{1}{2};x} & &\Rightarrow & \frac{e^x-1}{x}&=\!\! \ _1 F_1\zav{\nadsebou{1}{2};x}.\\
\int \frac{1}{1-x}{\rm d}x&=\int \!\! \ _1 F_0\zav{\nadsebou{1}{-};x}{\rm d}x=x\!\! \ _2 F_1\zav{\nadsebou{1\quad 1}{2};x} & &\Rightarrow & \ln\frac{1}{1-x}&=x\!\! \ _2 F_1\zav{\nadsebou{1\quad 1}{2};x}.\\
\int \frac{1}{1+x^2}{\rm d}x&=\int \!\! \ _1 F_0\zav{\nadsebou{1}{-};-x^2}{\rm d}x=x\!\! \ _2 F_1\zav{\nadsebou{1\quad \frac12}{\frac32};-x^2}& &\Rightarrow &{\rm arctan}(x)&=x\!\! \ _2 F_1\zav{\nadsebou{1\quad \frac12}{\frac32};-x^2}.\\
\int \frac{1}{\sqrt{1-x^2}}{\rm d}x&=\int \!\! \ _1 F_0\zav{\nadsebou{\frac12}{-};x^2}{\rm d}x=x\!\! \ _2 F_1\zav{\nadsebou{\frac12\quad \frac12}{\frac32};x^2}& &\Rightarrow &{\rm arcsin}(x)&=x\!\! \ _2 F_1\zav{\nadsebou{\frac12\quad \frac12}{\frac32};x^2}.\\
\end{align*}

Using the notation of hypergeometric functions might seem cumbersome and even inelegant at the first glance, but its added benefit is that much more information about the function is readily available. For example the notation
$$
x\!\! \ _2 F_1\zav{\nadsebou{1\quad \frac12}{\frac32};-x^2},
$$
tells us that we are looking at an odd function and also gives us the exact prescription for the $n$-term of its Taylor series. While  the symbol
$$
\arctan(x),
$$
points us only to the fact that this happens to be an inverse of something (specifically, tan$(x)$ function).

Another benefit is that we can describe antiderivatives, which are impossible to write using elementary functions only. In fact, many famous special functions can be written in terms of hypergeometric function. For instance:
\begin{align*}
\intertext{Error function:}
\int e^{x^2}{\rm d}x&=x\!\! \ _1 F_1\zav{\nadsebou{\frac12}{\frac32};x^2} & &\Rightarrow & {\rm erf}(x)&=\frac{2}{\sqrt{\pi}}\!\! \ _1 F_1\zav{\nadsebou{\frac12}{\frac32};x^2}.\\
\intertext{Sine integral:}
\int \frac{\sin x}{x}{\rm d}x&= \int \!\! \ _0 F_1\zav{\nadsebou{-}{\frac32};-\frac{x^2}{4}}{\rm d}x=x\!\! \ _1 F_2\zav{\nadsebou{\frac12}{\frac32\quad \frac32};-\frac{x^2}{4}} & &\Rightarrow & {\rm Si}(x)&=x\!\! \ _1 F_2\zav{\nadsebou{\frac12}{\frac32\quad \frac32};-\frac{x^2}{4}}.\\
\intertext{Inverse tangent integral:}
\int \frac{\arctan x}{x}{\rm d}x&= \int \!\! \ _2 F_1\zav{\nadsebou{1\quad \frac12}{\frac32};-x^2}{\rm d}x=x\!\! \ _3 F_2\zav{\nadsebou{1\quad \frac12\quad \frac12}{\frac32\quad \frac32};-x^2} & &\Rightarrow & {\rm Ti}(x)&=x\!\! \ _1 F_2\zav{\nadsebou{\frac12}{\frac32\quad \frac32};-\frac{x^2}{4}}.\\
\end{align*}
To list a few.

Hypergeometric functions can be also used to express solution to a ``trinomial'' equation:
$$
\alpha x^n+x=a,
$$
as
\begin{equation}\label{trieq}
x=a \!\!\ _{n} F_{n-1}\zav{\nadsebou{\frac{1}{n}\quad \dots \frac{n-1}{n}\quad 1 }{\frac{2}{n-1}\quad \frac{3}{n-1} \dots \frac{n+1}{n-1}};-\frac{\alpha n^{n} a^{n-1}}{(n-1)^{n-1}}}.
\end{equation}
And even some number-theoretical function:
\begin{equation}\label{zetaeta}
\zeta(k)=\!\!\ _{k+1} F_{k}\zav{\nadsebou{1\quad 1 \ \dots\ 1 \quad 1}{2\quad 2 \ \dots\ 2};1},\qquad
\eta(k)=\!\!\ _{k+1} F_{k}\zav{\nadsebou{1\quad 1 \ \dots\ 1 \quad 1}{2\quad 2 \ \dots\ 2};-1}.
\end{equation}

\subsection{Value at $x=0$}
Since it is possible to represent many antiderivatives in terms of hypergeometric function using Corollary \ref{prophyp}, we can compute definite integrals ``freshmen style'', i.e. using Fundamental theorem of calculus.

For this we will need to known the values of hypergeometric functions at specific points. The amazing thing is that many of these values are, indeed, known and there are quite general formulas valid for (almost) arbitrary parameters in some cases.  

The most trivial case is $x=0$. For this we have
\begin{equation}\label{value0}
\!\! \ _p F_q\zav{\nadsebou{a_1\dots a_p}{c_1\dots c_q};0} = 1.
\end{equation}
A fact that follows immediately from the definition of $\!\! \ _p F_q$ for $p+1\leq q$. 
\subsection{Value at $x=-\infty$}
The next easiest value to understand is at $x=-\infty$. There might be no value as such and the asymptotic behavior is, generally, quite complicated but it can be simplified to a remarkable degree for the purposes of integration, as follows:
\begin{lemma}
Assuming $a_j-a_k\not \in \mathbb{Z}$ $\forall j,k$, $j\not=k$, $1-a_j\not\in\mathbb{N}$, $\forall j$ provided $p\leq q+1$ it holds:
\begin{equation}\label{valueinfty}
(-x)^{\alpha}\!\! \ _p F_q\zav{\nadsebou{a_1 \dots a_p}{c_1 \dots c_q};x}\qquad \longrightarrow\qquad \prod_{{\tiny\nadsebou{i=1}{a_i\not=\alpha}}}^{p}\frac{\Gamma(a_i-\alpha)}{\Gamma(a_i)}\prod_{j=1}^{q}\frac{\Gamma(c_j)}{\Gamma(c_j-\alpha)},\qquad (x\to -\infty),
\end{equation}
\textit{if and only if $p\geq q-1$ and}
\begin{align*}
\text{for } p&>q-1 & \alpha&={\rm min}(a_1,\dots, a_p),\\
\intertext{or }
\text{for } p&=q-1 & \alpha&={\rm min}(a_1,\dots, a_p)<\sigma-\frac12, & \sigma&:=\sum_j c_j-\sum_i a_i.
\end{align*}
\end{lemma}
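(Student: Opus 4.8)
The plan is to split the large-argument behaviour of $\!\! \ _p F_q$ into an \emph{algebraic} part and an \emph{exponential} part, read off the leading algebraic term, and then decide case-by-case — according to the index $\nu:=q+1-p$ — when the exponential part is negligible as $x\to-\infty$. The cleanest vehicle is the Mellin--Barnes representation
\[
\!\! \ _p F_q\zav{\nadsebou{a_1 \dots a_p}{c_1 \dots c_q};x}=\frac{\prod_j\Gamma(c_j)}{\prod_i\Gamma(a_i)}\,\frac{1}{2\pi\imag}\int_{\mathcal C}\frac{\prod_i\Gamma(a_i+s)}{\prod_j\Gamma(c_j+s)}\Gamma(-s)(-x)^s\,{\rm d}s,
\]
where $\mathcal C$ separates the poles of $\Gamma(-s)$ from those of the $\Gamma(a_i+s)$. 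First I would push $\mathcal C$ to the left and collect residues at $s=-a_i-n$; the hypotheses $a_j-a_k\notin\mathbb Z$ make these poles simple and make $\min(a_1,\dots,a_p)$ unique. Since $-x\to+\infty$ the dominant residue is the one of largest real part, namely $s=-\alpha$ with $\alpha=\min(a_i)$. A one-line residue computation, using that $\Gamma(a_i+s)$ has residue $1$ at $s=-a_i$ and that $\Gamma(a_i)/\prod_k\Gamma(a_k)=1/\prod_{k\neq i}\Gamma(a_k)$, reproduces exactly the coefficient $\prod_{a_i\neq\alpha}\frac{\Gamma(a_i-\alpha)}{\Gamma(a_i)}\prod_j\frac{\Gamma(c_j)}{\Gamma(c_j-\alpha)}$ asserted on the right-hand side (I would sanity-check this against the known $\!\! \ _2 F_1$ connection formula). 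This already pins down the exponent: any $\alpha\neq\min(a_i)$ forces $(-x)^\alpha\!\! \ _p F_q$ to blow up or to vanish.

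The decisive issue is whether the non-algebraic (exponential) contribution survives after multiplication by $(-x)^\alpha$. I would organise this by $\nu=q+1-p$, using the classical fact that the exponential part of $\!\! \ _p F_q$ behaves like $x^{\vartheta}\exp(\nu x^{1/\nu})$. For $\nu=0$ ($p=q+1$) there is no exponential part at all, so the algebraic term is the genuine asymptotics and the limit holds. For $\nu=1$ ($p=q$) the exponential part carries $e^{x}$, exponentially small as $x\to-\infty$, so again the algebraic term wins with no extra hypothesis. For $\nu\geq 3$ ($p\leq q-2$) the principal exponent $\nu x^{1/\nu}$ on the negative axis has real part $\nu\abs{x}^{1/\nu}\cos(\pi/\nu)>0$, so $\!\! \ _p F_q$ grows exponentially and no algebraic limit can hold; this delivers the necessity of $p\geq q-1$.

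The heart of the matter, and the step I expect to fight with, is the marginal case $\nu=2$ ($p=q-1$). There $x^{1/\nu}=\sqrt{x}=\imag\sqrt{-x}$, so the two exponentials combine into an oscillation $\cos(2\sqrt{-x}+\varphi)$ carried by an algebraic envelope $(-x)^{\vartheta}$. I would pin down $\vartheta$ by a Stirling/saddle-point analysis of the Barnes integrand, calibrating against the Bessel case $\!\! \ _0 F_1$ and against the square $J_\nu^2$ (a $\!\! \ _1 F_2$), both of which yield $\vartheta=\frac14-\frac{\sigma}{2}$ with $\sigma=\sum_j c_j-\sum_i a_i$. Multiplying by $(-x)^\alpha$ turns the oscillatory term into $(-x)^{\alpha+\frac14-\frac{\sigma}{2}}\cos(2\sqrt{-x}+\varphi)$, which tends to $0$ precisely when $\alpha+\frac14-\frac{\sigma}{2}<0$; so the stated limit holds iff the algebraic term strictly dominates the oscillation. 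The genuinely delicate point is getting this envelope exponent exactly right, since at the balance point the oscillation has constant (or growing) amplitude and the limit fails. My computation gives the threshold $\alpha<\frac{\sigma}{2}-\frac14$, equivalently $2\alpha<\sigma-\frac12$; the $J_\nu^2$ example sits exactly at this boundary and indeed does \emph{not} converge. Before finalising I would therefore reconcile this with the form $\alpha<\sigma-\frac12$ printed in the statement, as the two agree away from the critical strip but differ precisely where the oscillatory envelope is marginal.
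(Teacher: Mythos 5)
Your strategy coincides with the paper's: split $\!\! \ _p F_q$ into an algebraic part plus an exponential part, identify the limit with the leading algebraic term, and decide case-by-case in $q+1-p$ when the exponential part is negligible on the negative real axis. The paper implements this by quoting the expansions of DLMF 16.11.6--16.11.9 directly (its ``standard terms'' $ST_{a}$ are exactly your residues at $s=-a_i$), so deriving the algebraic coefficients from the Mellin--Barnes integral is only a cosmetic difference; your residue computation reproduces the stated constant, and your treatment of $p=q+1$, $p=q$ and $p\leq q-2$ matches the paper's line by line.

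The one place where you diverge is the marginal case $p=q-1$, and there your answer is the correct one. Writing the argument as $-\frac{x^2}{4}$, the oscillatory terms carry the envelope $x^{\frac12-\sigma}$ while the standard term $ST_{a}\zav{-\frac{x^2}{4}}$ decays like $x^{-2a}$, so dominance of the algebraic term requires $2\alpha<\sigma-\frac12$, i.e. $\alpha<\frac{\sigma}{2}-\frac14$, exactly as you computed. The paper's proof at this step compares $x^{\frac12-\sigma}$ against $x^{-a}$ rather than $x^{-2a}$ and thereby arrives at the printed condition $\alpha<\sigma-\frac12$, which is too weak. A clean counterexample is $\!\! \ _1 F_2\zav{\nadsebou{1}{2\quad \frac32};-x^2}=\zav{\frac{\sin x}{x}}^2$, listed in Section 3 of the paper: here $\sigma=\frac52$ and $\alpha=1$ satisfies $\alpha<\sigma-\frac12=2$, yet $x^2\zav{\frac{\sin x}{x}}^2=\sin^2 x=\frac12-\frac{\cos 2x}{2}$ has no limit --- the algebraic term and the oscillation decay at the same rate, which is precisely the boundary $\alpha=\frac{\sigma}{2}-\frac14$ of your criterion. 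So do not ``reconcile'' your threshold with the printed one: keep $\alpha<\frac{\sigma}{2}-\frac14$ (equivalently $2\alpha<\sigma-\frac12$), and note that the lemma as stated, and the paper's own proof of it, need this correction.
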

\begin{proof}
The proof is break down into multiple cases. 

\bigskip
\emph{Case:} $p=q+1$. Define a \textit{standard term} $ST_{a}(x)$ associated to the upper parameter $a$ as follows:
$$ ST_{a}(x):=\prod_{j=1}^{p-1}\frac{\Gamma(a_j-a)}{\Gamma(a_j)}\prod_{i=1}^{q}\frac{\Gamma(c_i)}{\Gamma(c_i-a)} (-x)^{-a} \!\! \ _{q+1} F_{p-1}\zav{\nadsebou{a\quad 1-c_1+a\dots 1-c_q+a}{1-a_1+a\dots 1-a_{p-1}+a};(-1)^{p+1-q}\frac{1}{x}}.
$$
The asymptotic behavior for a large argument of $\!\!  _{q+1} F_q$ is govern by standard terms only. Specifically it holds:  

For $\abs{\arg(-x)}< \pi$ we have 
\begin{eqnarray}\label{AC8}
\!\! \ _{q+1} F_{q}\zav{\nadsebou{  a_1 \quad \dots \quad a_{q+1}}{ c_1\quad \dots \quad c_q};x}&=&\sum_{j=1}^{q+1} ST_{a_j}(x).
\end{eqnarray}
See \cite[16.11.6]{dlmfeq}. Since the principal behavior of any standard term $ST_a(x)$ is polynomial growth 
$$
ST_a(x)\propto (-x)^{-a},\qquad (x\to -\infty),
$$
the dominant behavior would arrive from the lowest upper parameter. Hence, we arrive at the result.

\bigskip
\emph{Case: }$p=q$. Asymptotic behavior of $\!\! \ _q F_q$ is no longer govern by standard terms only, there is in addition an ``exponential term'':
 
For $\abs{\arg(-x)}< \pi$ we have
\begin{eqnarray}
\!\! \ _{q} F_{q}\zav{\nadsebou{  a_1 \quad \dots \quad a_{q}}{ c_1\quad \dots \quad c_q};x}&\sim &\sum_{j=1}^{q} ST_{a_j}(x)+\frac{\Gamma(c_1)\cdots \Gamma(c_q)}{\Gamma(a_1)\cdots \Gamma(a_q)} e^{x}x^{-\sigma},\qquad (\abs{x}\to \infty).
\end{eqnarray} 
See \cite[16.11.7]{dlmfeq}. Since the exponential term is negligible as $x\to -\infty$, the result follows as well.

\bigskip
\emph{Case: }$p=q-1$. In this case we have two exponential terms, one for every square root of the argument:

\begin{eqnarray}
\!\! \ _{q-1} F_{q}\zav{\nadsebou{  a_1 \quad \dots \quad a_{q-1}}{ c_1\quad \dots \quad c_q};-\frac{x^2}{4}}&\sim &\sum_{j=1}^{q-1} ST_{a_j}\zav{-\frac{x^2}{4}}\\
&+& \gamma e^{\imag x}\zav{\frac{\imag x}{2}}^{\frac12-\sigma}\\
&+& \gamma e^{-\imag x}\zav{-\frac{\imag x}{2}}^{\frac12-\sigma},\qquad (x\to \infty),
\end{eqnarray}
where $\gamma$ is some real constant depending on the parameters. See \cite[16.11.8]{dlmfeq}. 

In this case the exponential terms behaves only polynomially (because of imaginary unit present in the exponentials). Hence to ensure the dominance of the standard term associated to the least upper parameter, it is further necessary to make sure that this minimum parameter $a$ is strictly lower than $\sigma-\frac12$.

\bigskip
\emph{Case: }$p<q-1$. In this case there are so many exponential terms (one for every $q+1-p$-th root of the argument) that it is impossible for $\!\! \ _p F_q$ to growth only polynomially (see \cite[16.11.9]{dlmfeq}). One exponential is always dominant as $x\to -\infty$. Thus the limit (\ref{valueinfty}) cannot exists.
\end{proof}

The case when one of the upper parameters is a non-positive integer is even simpler:
\begin{lemma}
Assuming $n+1\in\mathbb{N}$, $a_j-a_k\not \in \mathbb{Z}$ $\forall j,k$, $j\not=k$, $1-a_j\not\in\mathbb{N}$, $\forall j$ provided $p\leq q+1$ it holds:
\begin{equation}\label{valueinftypol}
(-x)^{-n}\!\! \ _p F_q\zav{\nadsebou{-n\quad a_2 \dots a_p}{c_1 \dots c_q};x}\qquad \longrightarrow\qquad \prod_{i=2}^{p}(a_i)_n\prod_{j=1}^{q}\frac{1}{(c_j)_n},\qquad (x\to -\infty).
\end{equation}
\end{lemma}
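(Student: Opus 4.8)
The plan is to exploit the fact that a non-positive integer upper parameter forces the defining series to terminate, so that the hypergeometric function is merely a polynomial and its behaviour as $x\to-\infty$ is dictated entirely by its leading monomial. This case is therefore much more elementary than the preceding lemma: no asymptotic machinery of standard terms is needed at all.

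First I would note that, since $(-n)_k=0$ for every $k>n$, the series definition (\ref{seriesdef}) collapses to the finite sum
$$
\!\! \ _p F_q\zav{\nadsebou{-n\quad a_2 \dots a_p}{c_1 \dots c_q};x}=\sum_{k=0}^{n}\frac{(-n)_k(a_2)_k\cdots (a_p)_k}{(c_1)_k\cdots (c_q)_k}\frac{x^k}{k!},
$$
which is a polynomial in $x$ of degree at most $n$. Next I would isolate its top-degree coefficient, namely the $k=n$ term. Using $(-n)_n=(-1)^n n!$, this coefficient equals
$$
(-1)^n\prod_{i=2}^{p}(a_i)_n\prod_{j=1}^{q}\frac{1}{(c_j)_n}.
$$
The standing hypotheses ($1-c_j\not\in\mathbb{N}$, and no two upper parameters differing by an integer) guarantee that each $(c_j)_n\not=0$ and that the numerator product does not vanish, so the polynomial indeed has degree exactly $n$.

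Finally, I would multiply by $(-x)^{-n}$ and pass to the limit $x\to-\infty$. Every term with $k<n$ then contributes a factor $O(x^{k-n})$, which tends to $0$, while the surviving $k=n$ term gives, via $(-x)^{-n}x^n=(-1)^{-n}=(-1)^n$, the value
$$
(-1)^n\cdot(-1)^n\prod_{i=2}^{p}(a_i)_n\prod_{j=1}^{q}\frac{1}{(c_j)_n}=\prod_{i=2}^{p}(a_i)_n\prod_{j=1}^{q}\frac{1}{(c_j)_n},
$$
which is precisely the asserted limit (\ref{valueinftypol}). There is essentially no serious obstacle here; the only care required is the sign bookkeeping between $(-n)_n$ and $(-x)^{-n}$, together with the observation that the hypotheses force the terminating polynomial to have degree exactly $n$ so that the leading term does not accidentally drop out.
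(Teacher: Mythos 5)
Your argument is correct: the termination of the series at $k=n$, the identity $(-n)_n=(-1)^n n!$, the cancellation $(-x)^{-n}x^n=(-1)^n$, and the vanishing of the lower-order terms under $(-x)^{-n}$ all check out, and they yield exactly the stated limit. The route is slightly different from the paper's, which simply observes that the terminating $\!\! \ _p F_q$ is \emph{equal} to its standard term $ST_{-n}(x)$ (citing DLMF 16.2.3) and then reads off the asymptotics from the standard-term formula $ST_a(x)\propto(-x)^{-a}$ established for the previous lemma; the Gamma-ratio constant there, $\prod_j\Gamma(a_j+n)/\Gamma(a_j)\cdot\prod_i\Gamma(c_i)/\Gamma(c_i+n)$, is of course the same product of Pochhammer symbols you obtain. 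Your version is self-contained and more elementary, needing no asymptotic machinery at all, at the cost of not exhibiting the connection to the standard-term decomposition that organizes the non-terminating case. One cosmetic remark: your observation that the degree is exactly $n$ is not actually needed --- if the leading coefficient happened to vanish, both sides of the claimed limit would be zero and the statement would still hold --- though under the stated hypotheses it is true that $(a_i)_n\not=0$ for $i\geq 2$ and $(c_j)_n\not=0$.
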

\begin{proof} The $\!\! \ _p F_q$ function is in fact a polynomial, which is actually \textit{equal} to its standard term associated with $-n$ parameter:
$$
\!\! \ _p F_q\zav{\nadsebou{-n\quad a_2\dots a_p}{c_1\dots c_q};x}=ST_{-n}(x).
$$   
See \cite[16.2.3]{dlmfeq2}
\end{proof}
\begin{example}
As we saw the arctan$(x)$ function can be described using hypergeometric functions as
$$
\int \frac{1}{1+x^2}{\rm d}x=\int \frac\!\! \ _1 F_0\zav{\nadsebou{1}{-};-x^2}{\rm d}x\stackrel{(\ref{HFFTCformula})}{=} x\!\!\ _2 F_1\zav{\nadsebou{1\quad \frac12}{\frac32};-x^2}.
$$
By (\ref{value0}) we thus have arctan$(0)=0$ and since the smallest upper parameter equals to $\frac12$ we have 
$$
\inte{0}{\infty}\frac{1}{1+x^2}{\rm d}x=\hzav{x\!\!\ _2 F_1\zav{\nadsebou{1\quad \frac12}{\frac32};-x^2}}^{\infty}_{0}\stackrel{(\ref{valueinfty}),(\ref{value0})}{=}\frac{\Gamma(\frac32)\Gamma(\frac12)}{\Gamma(1)\Gamma(1)}-0=\frac{1}{2}\Gamma^2\zav{\frac12}.
$$
Since we also know that arctan$(\infty)=\frac{\pi}{2}$ from the fact that it is an inverse of tan$(x)$, we can conclude that
$$
\Gamma^2\zav{\frac12}=\pi.
$$
This is a much easier proof of this very well known value of Gamma function than the standard one computing
$$
\inte{-\infty}{\infty}e^{x^2}=\sqrt{\pi},
$$
using multi-variable calculus!
\end{example}
\begin{example}
Similarly we can compute
$$
\inte{0}{\infty}\frac{1}{1+x^3}{\rm d}x=\inte{0}{\infty}\!\! \ _1 F_0\zav{\nadsebou{1}{-};-x^3}{\rm d}x\stackrel{(\ref{HFFTCformula})}{=} \hzav{x\!\! \ _2 F_1\zav{\nadsebou{1\quad \frac13}{\frac43};-x^3}}^{\infty}_{0}\stackrel{(\ref{valueinfty})}{=}\frac{\Gamma(\frac23)\Gamma(\frac43)}{\Gamma(1)\Gamma(1)}
$$
$$
=\frac{\Gamma\zav{\frac23}\Gamma\zav{\frac13}}{3}=\frac{\pi}{3\sin\frac{\pi}{3}}=\frac{2\pi}{3\sqrt{3}}.
$$
\end{example}
\begin{example}
Since we can represent solutions of the trinomial equation by (\ref{trieq}), we can even solve problems of the following type:
$$
\inte{0}{\infty}\frac{y}{x^{\frac75}}{\rm d}x,\qquad  \alpha y^5+y=x.
$$

Since by (\ref{trieq})
$$
y=x\!\! \ _4 F_3\zav{\nadsebou{\frac15\quad \frac25\quad \frac35\quad \frac45}{ \frac24\quad \frac34\quad \frac54};-\alpha\frac{5^5}{4^4}x^{4}},
$$
we have
$$
\inte{0}{\infty}\frac{y}{x^{\frac75}}{\rm d}x=\inte{0}{\infty}x^{-\frac{2}{5}}\!\! \ _4 F_3\zav{\nadsebou{\frac15\quad \frac25\quad \frac35\quad \frac45}{ \frac24\quad \frac34\quad \frac54};-\alpha\frac{5^5}{4^4}x^{4}} {\rm d}x
=\hzav{\frac{5x^{\frac35}}{3}\!\! \ _5 F_4\zav{\nadsebou{\frac{3}{20}\quad\frac15\quad \frac25\quad \frac35\quad \frac45}{\frac{23}{20}\quad \frac24\quad \frac34\quad \frac54};-\alpha\frac{5^5}{4^4}x^{4}} }^{\infty}_{0}
$$
$$
\stackrel{(\ref{valueinfty})}{=}\frac{5}{3}\zav{\alpha\frac{5^5}{4^4}}^{-\frac{3}{20}}\frac{\Gamma(\frac{1}{20})\Gamma(\frac{5}{20})\Gamma(\frac{9}{20})\Gamma(\frac{13}{20})\Gamma(\frac{2}{4})\Gamma(\frac{3}{4})\Gamma(\frac{5}{4})\Gamma(\frac{23}{20})}{\Gamma(\frac{1}{5})\Gamma(\frac{2}{5})\Gamma(\frac{3}{5})\Gamma(\frac{4}{5})\Gamma(\frac{7}{20})\Gamma(\frac{13}{20})\Gamma(\frac{22}{20})}=\alpha^{-\frac{3}{20}}\frac{\Gamma(\frac{3}{20})\Gamma(\frac{1}{4})}{4\Gamma(\frac{7}{5})}.
$$
Generally:
$$
\inte{0}{\infty}x^\beta y{\rm d}x=\alpha^{-\frac{\beta+2}{4}}\frac{\Gamma(\frac{\beta+2}{4})\Gamma(-\frac54 \beta-\frac32)}{4 \Gamma(-\beta)},\qquad -2<\beta<-\frac65.
$$
The constrains on beta stems from the fact that $\beta+2$ (the added parameter from integration) must be the positive and smallest upper parameter in order the limit in $\ref{valueinfty}$ to exists.
\end{example}
\begin{example} We are now ready to prove the identity
$$
\inte{0}{\infty} \frac{\sqrt{\sqrt{1+x}-1}}{x^{\frac{11}{8}}}{\rm d}x=\frac{4 \Gamma^2\zav{\frac14}}{3\sqrt{2-\sqrt{2}}\sqrt{\pi}}.
$$
The hardest step with our method is always to represent the integrand as a hypergeometric function (if it is possible). Here (as we can see) it does not appear to be at all obvious how to proceed. Especially, the nested roots are seemingly hard to handle. 

The function under the outer root sign can be represented quite easily:
$$
\sqrt{1+x}-1=\frac12\inte{0}{x} \frac{1}{\sqrt{t+1}}{\rm d}t=\frac12 \inte{0}{x}\!\! \ _1 F_0\zav{\nadsebou{\frac12}{-};-t}{\rm d}t\stackrel{(\ref{HFFTCformula})}{=}
\frac12 x \!\! \ _2 F_1\zav{\nadsebou{\frac12\quad 1 }{2};-x}.
$$
So now we just take square root of this hypergeometric function. Now hypergeometric functions are not in general closed under multiplication, the one exception being: 
\begin{align*}
\!\! \ _1 F_0\zav{\nadsebou{a}{-};x}&\!\! \ _1 F_0\zav{\nadsebou{b}{-};x}=\!\! \ _1 F_0\zav{\nadsebou{a+b}{-};x},\\
\!\! \ _1 F_0\zav{\nadsebou{a}{-};x}&\!\! \ _1 F_0\zav{\nadsebou{a}{-};-x}=\!\! \ _1 F_0\zav{\nadsebou{a}{-};x^2}.\\
\end{align*}
For other cases, only for specific values of parameters can be multiplication of two hyp. functions represented by another hyp. function. Fortunately, that is our case since there is a famous formula
\begin{equation}\label{2F12F1}
\!\! \ _2 F^2_1\zav{\nadsebou{ a \quad b}{a+b+\frac{1}{2}};x}=\!\! \ _3 F_2\zav{\nadsebou{ 2a \quad a+b\quad 2b}{a+b+\frac{1}{2}\quad 2a+2b};x}.
\end{equation}

From this we can see that for $a=\frac14,b=\frac34$ it holds
$$
\!\! \ _2 F^2_1\zav{\nadsebou{ \frac14 \quad \frac34}{\frac32};x}=\!\! \ _3 F_2\zav{\nadsebou{ \frac12 \quad 1\quad \frac32}{\frac{3}{2}\quad 2};x}=\!\! \ _2 F_1\zav{\nadsebou{ \frac12 \quad 1}{\frac{3}{2}};x}.
$$
Thus
\begin{equation}\label{sqsqrep}
\sqrt{\frac{\sqrt{1+x}-1}{x}}=\frac{1}{\sqrt{2}} \!\! \ _2 F_1\zav{\nadsebou{\frac14\quad \frac34}{\frac32};-x},
\end{equation}
and hence
$$
\inte{0}{\infty} \frac{\sqrt{\sqrt{1+x}-1}}{x^{\frac{11}{8}}}{\rm d}x=\inte{0}{\infty} x^{-\frac78}\frac{1}{\sqrt{2}} \!\! \ _2 F_1\zav{\nadsebou{\frac14\quad \frac34}{\frac32};-x}{\rm d}x\stackrel{(\ref{HFFTCformula})}{=}\hzav{\frac{8}{\sqrt{2}}x^{\frac18} \!\! \ _3 F_2\zav{\nadsebou{\frac14\quad \frac34\quad \frac18}{\frac32\quad \frac98};-x}}^{\infty}_{0}
$$
$$
\stackrel{(\ref{valueinfty})}{=}\frac{8 }{\sqrt{2}} \frac{\Gamma(\frac18)\Gamma(\frac58)\Gamma(\frac{9}{8})\Gamma(\frac32)}{\Gamma(\frac14)\Gamma(\frac34) \Gamma(1)\Gamma(\frac{11}{8})}=\dots =
\frac{4 \Gamma^2\zav{\frac14}}{3\sqrt{2-\sqrt{2}}\sqrt{\pi}},
$$
where in the dots, various well-known identities valid for the Gamma function must be used to obtain the result. (Specifically: formula for the double argument, reflection formula and $\Gamma(x+1)=x\Gamma(x)$.)
\end{example}
\begin{example}
Of course, armed with the representation (\ref{sqsqrep}) one can easily produce general identity
$$
\inte{0}{\infty} x^{\alpha-1}\sqrt{\sqrt{1+x}-1}{\rm d}x=\sqrt{\frac{2}{\pi}} \Gamma(2\alpha)\Gamma\zav{-\frac12-2\alpha}\sin(\pi \alpha), \qquad -\frac{1}{2}<\alpha<-\frac{1}{4}.
$$
\end{example}
\begin{example}
The representation (\ref{sqsqrep}) can be also seen as a consequence of the identity
$$
\!\! \ _2 F_1\zav{\nadsebou{ a \quad b}{a+b+\frac{1}{2}};x}=\!\! \ _2 F_1\zav{\nadsebou{ 2a \quad 2b}{a+b+\frac{1}{2}};\frac{1-\sqrt{1-x}}{2}}.
$$
From this we can see that (\ref{sqsqrep}) is, in fact, a special case of a general formula 
\begin{equation}\label{gensqrrep}
\zav{\frac{\sqrt{1+x}-1}{x}}^{\beta}=2^{-\beta}\!\! \ _2 F_1\zav{\nadsebou{\frac{\beta}{2}\quad \frac{\beta+1}{2}}{\beta+1};-x}.
\end{equation}
Thus we can compute
$$
\inte{0}{\infty} x^{\alpha-1}\zav{{\sqrt{1+x}-1}}^{\beta}{\rm d}x=\frac{\beta\Gamma(-\beta-2\alpha)\Gamma(\alpha+\beta)2^{2\alpha+\beta}}{ \Gamma(1-\alpha)}, \qquad -\beta<\alpha<-\frac{\beta}{2}.
$$
\end{example}
\begin{example}
As a final touch we can represent many seemingly different function by the same hypergeometric function just noticing that
$$
\zav{\frac{\sqrt{1+x}-1}{x}}^{\beta}=\zav{\frac{\zav{\sqrt{1+x}-1}^n}{x^n}}^{\frac{\beta}{n}},
$$
for $n\in \mathbb{N}$. Setting $n=4$ we obtain the final result (\ref{maplenogo}):
$$
\inte{0}{\infty}\sqrt[8]{\frac{x^2+8x+8-4(2+x)\sqrt{1+x}}{x^{11}}}{\rm d}x=\frac{4 \Gamma^2\zav{\frac14}}{3\sqrt{2-\sqrt{2}}\sqrt{\pi}}.
$$
\end{example}

\subsection{Summation formulas}
Since the operation of taking the antiderivative is invariant in the generalized hypergeometric function family, the task of evaluating definite integrals of functions from this family is effectively reduced to applying one of the many known ``summation formulas'', i.e. values of hypergeometric functions at specific points (besides $0$ and $-\infty$).

Out of these, the most famous is Gauss's summation formula:
\begin{equation}\label{2F11}
\!\! \ _2 F_1\zav{\nadsebou{a\quad b}{c};1}=\frac{\Gamma(c)\Gamma(c-a-b)}{\Gamma(c-a)\Gamma(c-b)},\qquad c>a+b,
\end{equation}
which is (to the authors knowledge) the only one for which the number of ``free'' parameters is maximal.

Formulas for other arguments than $x=1$ are usually derived using some transform that bring a desired point to $x=1$. For example, from the identity
$$
\!\! \ _2 F_1\zav{\nadsebou{ a \quad b}{1+a-b};x}=(1-x)^{-a}\!\! \ _2 F_1\zav{\nadsebou{ \frac{a}{2} \quad \frac{1+a}{2}-b}{1+a-b};-\frac{4x}{(1-x)^2}} ,
$$
one can obtain putting $x=-1$ and using (\ref{2F11}):
\begin{equation}\label{2F1-1}
\!\! \ _2 F_1\zav{\nadsebou{ a \quad b}{1+a-b};-1}=2^{-a}\frac{\Gamma(1+a-b)\Gamma(\frac12)}{\Gamma\zav{1+\frac{a}{2}-b}\Gamma(\frac{1+a}{2}-b)}.
\end{equation}
Applying the Pfaff transform:
\begin{equation}\label{Pfaff}
\!\! \ _2 F_1\zav{\nadsebou{ a \quad b}{c};x}=(1-x)^{-b}\!\! \ _2 F_1\zav{\nadsebou{ c-a \quad b}{c};\frac{x}{x-1}},
\end{equation}
on the left hand side we can obtain a summation formula for $x=\frac12$:
\begin{equation}\label{2F11/2}
\!\! \ _2 F_1\zav{\nadsebou{ a \quad 1+a-2b}{1+a-b};\frac12}=\frac{\Gamma(1+a-b)\Gamma(\frac12)}{\Gamma\zav{1+\frac{a}{2}-b}\Gamma(\frac{1+a}{2}-b)}.
\end{equation}
And so on. 

The cost of this procedure is usually one (or more) free parameter.

Actually, since the Pfaff transform changes the argument from $x=1$ to $-\infty$, Gauss's summation formula (\ref{2F11}) itself, can be derived this way. 
The argument is as follows: Assuming $a<c-b$, $c-b-a\not\in \mathbb{Z}$ it holds as $x\to -\infty$:
\begin{align}
(-x)^{a}(1-x)^{-a} & \!\! \ _2 F_1\zav{\nadsebou{a\quad b}{c};\frac{x}{x-1}}& &\stackrel{(\ref{Pfaff})}{=}& (-x)^{a}&\!\! \ _2 F_1\zav{\nadsebou{a\quad c-b}{c};x},\nonumber\\
&\downarrow & &\ \ \Downarrow & &\downarrow\quad (\ref{valueinfty}) \nonumber\\
&\!\! \ _2 F_1\zav{\nadsebou{a\quad b}{c};1} & &\ \ =& &\frac{\Gamma(c)\Gamma(c-b-a)}{\Gamma(c-a)\Gamma(c-b)}.\nonumber
\end{align}

The case $c-b-a\in \mathbb{Z}$ can be dealt with using a simple limit argument.

Hence, we can think of the value at $-\infty$ (\ref{valueinfty}) as the fundamental value of a hypergeometric function, from which other values are derived.

\bigskip
Summation formulas are extremely important in many application, especially if its values can be written in terms of Gamma function as was the case in the shown examples.

For instance, if the product of two hypergeometric functions can be written once again as a hypergeometric function, is closely related to a (terminating) summation formula. Take the product we encounter before (\ref{2F12F1}):
$$
\!\! \ _2 F^2_1\zav{\nadsebou{ a \quad b}{a+b+\frac{1}{2}};x}=\!\! \ _3 F_2\zav{\nadsebou{ 2a \quad a+b\quad 2b}{a+b+\frac{1}{2}\quad 2a+2b};x}.
$$
Rearranging the double sum on the left hand side by $j\to j-k$,
$$
(LHS)=
\sum_{j,k=0}^{\infty}\frac{(a)_k(b)_k}{(a+b+\frac12)_k k!}\frac{(a)_j(b)_j}{(a+b+\frac12)_j j!}x^{k+j}=\sum_{j=0}^{\infty}\frac{(a)_{j-k}(b)_{j-k}}{(a+b+\frac12)_{j-k} (j-k)!}x^{j}\frac{(a)_k(b)_k}{(a+b+\frac12)_k k!},
$$
using the facts 
$$
(a)_{j-k}=\frac{(a)_j (-1)^k}{(1-a-j)_k}m\qquad \frac{1}{(j-k)!}=\frac{(-j)_k (-1)^k}{j!},
$$
we obtain
$$
=\sum_{j=0}^{\infty}\frac{(a)_{j}(b)_{j}}{(a+b+\frac12)_{j} (j)!}x^{j}\sum_{k=0}^{\infty}\frac{(a)_k(b)_k (-j)_k(\frac12-a-b-j)_k}{(a+b+\frac12)_k(1-a-j)_k(1-b-j)_k k!}
$$
$$
=
\sum_{j=0}^{\infty} \frac{(a)_{j}(b)_{j}}{(a+b+\frac12)_{j} (j)!}x^{j}
\!\! \ _4 F_3\zav{\nadsebou{-j\quad a\quad b\quad \frac12-a-b-j}{1-a-j\quad 1-b-j\quad a+b+\frac12};1}.
$$
Comparing with the like powers of $x$ on the right hand side of (\ref{2F12F1}) we discover terminating version of (essentially) Rogers-Dougall very well posed sum \cite[16.4.9]{dlmf16.4}:
$$
\!\! \ _4 F_3\zav{\nadsebou{-j\quad a\quad b\quad \frac12-a-b-j}{1-a-j\quad 1-b-j\quad a+b+\frac12};1}=\frac{(2a)_j(a+b)_j(2b)_k}{(2a+2b)_j(a)_j(b)_j}.
$$
Similarly from the fact that 
$$
(1-x)^{-a}(1-x)^{-b}=(1-x)^{-a-b},
$$
we can retrieve the terminating version of the Gauss's summation formula (\ref{2F11}) and from
$$
(1-x)^{-a}(1+x)^{-a}=(1-x^2)^{-a},
$$
the terminating version of (\ref{2F1-1}).

Reversing the logic, the \textit{consequence} of (\ref{2F11}) is the formula
$$
\!\! \ _0 F_1\zav{\nadsebou{ -}{a};x}\!\! \ _0 F_1\zav{\nadsebou{ -}{b};x} =\!\! \ _2 F_3\zav{\nadsebou{ \frac{b+a-1}{2}\quad \frac{b+a}{2}}{a\quad b\quad b+a-1};4 x}.
$$

\bigskip
Summation formulas are also of use in number theory since many interesting numbers can be written as a values of hypergeometric functions. For instance
\begin{align}
\zeta(3)&=\!\! \ _4 F_3\zav{\nadsebou{1\quad 1\quad 1\quad 1 }{2\quad 2\quad 2};1}, & \text{Ap\'ery's constant.}\\
L_1&:=\frac{\Gamma^2(\frac14)}{\sqrt{\pi}}=4\sqrt{2}\!\! \ _2 F_1\zav{\nadsebou{\frac12\quad \frac14}{\frac54};1}, & \text{Lemniscate constant.}\\
G&=\!\! \ _3 F_2\zav{\nadsebou{\frac12\quad \frac12\quad 1}{\frac32 \quad \frac32};-1}, & \text{Catalan's constant.}\label{Catalan}\\
e^{\pi}&=\!\! \ _2 F_1\zav{\nadsebou{\imag\quad -\imag}{\frac12};1}+2\!\! \ _2 F_1\zav{\nadsebou{\frac12+\imag\quad \frac12-\imag}{\frac32};1},&\text{Gelfond's constant.}
\end{align}
In the next section we will show how this representation can be used.

\bigskip
For these applications (and many more) summation formulas were extensively studied by many people (\cite{dlmfref1},\cite{dlmfref2},\cite{dlmfref3},\cite{dlmfref4} to mentioned a few) and it remained an open topic even to this day. The point of this paper is that we can use this vast knowledge acquired in centuries also to evaluate (some) definite integrals.

And vice versa: 
\begin{example}\label{3F2-1/3ex}
The value
\begin{equation}\label{3F2-1/3}
\!\! \ _3F_2\zav{\nadsebou{2\quad \frac34\quad \frac54}{\frac74\quad \frac94};-\frac13},
\end{equation}
is unlikely to be a special case of some known summation formula, for there is no transform for $\!\! \ _3F_2$ that brings the point $x=-\frac13$ to $x=1$. (There is one for $\!\! \ _2F_1$ with $1$ free parameter.)

We can nonetheless compute this value noticing that both lower parameters differ form upper ones by 1. Hence it can be represented as an iterated integral:
$$
\frac{x^5}{5\cdot 3}\!\! \ _3F_2\zav{\nadsebou{2\quad \frac34\quad \frac54}{\frac74\quad \frac94};-x^4}=\int x^4 \!\! \ _2F_1\zav{\nadsebou{2\quad \frac34}{\frac74};-x^4}{\rm d}x=\int x \int \frac{x^2}{(1+x^4)^2}{\rm d}x{\rm d}x,  
$$
where in each step the antiderivative is chosen so that its value at $x=0$ is zero.

Since
$$
\int x \int \frac{x^2}{(1+x^4)^2}{\rm d}x{\rm d}x=
\int x\zav{ \frac14\frac{x^3}{(1+x^4)}+\frac{\sqrt{2}}{16}{\rm arctan}\frac{x\sqrt{2} }{1-x^2}+\frac{\sqrt{2}}{32}\ln\frac{x^2-\sqrt{2}x+1}{x^2+x\sqrt{2}+1}}{\rm d}x
$$
\begin{equation}\label{res}
=\frac{x}{8}+\frac{\sqrt{2}}{32}(x^2-1){\rm arctan}\frac{x\sqrt{2}}{1-x^2}+\frac{\sqrt{2}}{64}(x^2+1)\ln\frac{x^2-x\sqrt{2}+1}{x^2+x\sqrt{2}+1},
\end{equation}
the value of (\ref{3F2-1/3}) is the above expression divided by $\frac{x^5}{15}$ and evaluated at $x=1/\sqrt[4]{3}$. It is moreover easy to see that the result is a transcendental number by Baker's Theorem \cite{Baker}.
\end{example}
%

\section{Hypergeometric function with differentiated parameters}\label{S4}
The hardest step in our method is, for a given function, to find a suitable representation as a hypergeometric function. This si often impossible to do.

For example, the function 
$$
{\rm arcsin}^3 x,
$$ 
probably cannot be expressed as a single variable hypergeometric function. The first power can be, of course:
$$
{\rm arcsin} x=
x\!\! \ _2F_1\zav{\nadsebou{\frac12\quad \frac12}{\frac32};x^2},
$$
and since (\ref{2F12F1}) we have also a formula for the second power
$$
{\rm arcsin}^2 x=x^2\!\! \ _3 F_2\zav{\nadsebou{1\quad 1\quad 1}{2\quad \frac32};x^2},
$$
but a similar representation for the third power is unlikely to exists.

Still, we can express it as a hypergeometric function which is differentiated with respect to its parameters like so:
$$
{\rm arcsin}^3 x=-\frac32  \hzav{\epsilon^2} x\!\! \ _2 F_1\zav{\nadsebou{\frac12-\epsilon\quad \frac12+\epsilon}{\frac32};x^2},
$$

We can also represent the second power:
$$
{\rm arcsin}^2x =\frac12\hzav{\epsilon^2}\!\! \ _2 F_1\zav{\nadsebou{\epsilon\quad \epsilon}{\frac12};x^2}.
$$
In fact, here is a short list of a examples we are able to represent this way:
\begin{proposition}
\begin{align}
\ln\frac{1}{1-x}&=\hzav{\epsilon}\!\! \ _1 F_0\zav{\nadsebou{\epsilon}{-};x},\\
(1-x)^{-a}\ln^k\frac{1}{1-x}&=\hzav{\epsilon^k}\!\! \ _1 F_0\zav{\nadsebou{a+\epsilon}{-};x},\label{lnrep}\\
\ln^k\zav{\frac{1+\sqrt{1-x}}{2}}&=\frac{k!}{(-2)^k}\hzav{\epsilon^k}\!\! \ _2 F_1\zav{\nadsebou{\epsilon\quad \frac12+\epsilon}{1+2\epsilon};x},\\
{\rm arcsin}^{2k}x&=\frac{(2k)!}{(-4)^k}\hzav{\epsilon^{2k}} \!\! \ _2 F_1\zav{\nadsebou{-\epsilon\quad \epsilon}{\frac12};x^2},\\
{\rm arcsin }^{2k+1}\!x&=\frac{(2k+1)!}{(-4)^k}\hzav{\epsilon^{2k}} x \!\! \ _2 F_1\zav{\nadsebou{\frac12+\epsilon\quad \frac12-\epsilon}{\frac32};x^2},\\
\sum_{k=1}^{\infty} H_k \frac{x^k}{k!}&=\hzav{\epsilon}\!\! \ _1 F_1\zav{\nadsebou{1}{1-\epsilon};x},& H_k&:=\sum_{j=1}^{k}\frac{1}{j},\\
{\rm Li}_n\zav{x}&= \hzav{\epsilon^n}\!\! \ _n F_{n-1}\zav{\nadsebou{\epsilon\dots \epsilon}{1\dots 1};x},& {\rm Li}_n\zav{x}&:=\sum_{k=1}^{\infty}\frac{x^k}{k^n}.
\end{align}
\end{proposition}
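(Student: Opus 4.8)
The plan is to prove all seven lines uniformly. For each, the hypergeometric function on the right is, at a fixed $x$ in its disk of convergence, holomorphic in $\epsilon$ near the origin, so the bracket $\hzav{\epsilon^m}$ merely reads off a Taylor coefficient in $\epsilon$ and the whole task is to expand that function in powers of $\epsilon$; the prefactors $\frac{k!}{(-2)^k}$, $\frac{(2k)!}{(-4)^k}$, $\frac{(2k+1)!}{(-4)^k}$ are exactly the factors that undo the factorials these expansions create, so I would use them only as a final bookkeeping check. I would start with the two $\!\! \ _1 F_0$ lines: since $\!\! \ _1 F_0\zav{\nadsebou{a+\epsilon}{-};x}=(1-x)^{-a-\epsilon}=(1-x)^{-a}\exp\zav{\epsilon\ln\frac{1}{1-x}}$, expanding the exponential exhibits $\hzav{\epsilon^k}$ as $(1-x)^{-a}$ times a power of $\ln\frac{1}{1-x}$, the first line being the case $a=0$, $k=1$, and the only thing to check is that for $\abs{x}<1$ the binomial series may be differentiated under the sum.

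Next come the three $\!\! \ _2 F_1$ lines, which I would reduce to classical closed forms in $\epsilon$. For the logarithm I would apply the quadratic transformation to $\!\! \ _2 F_1\zav{\nadsebou{\epsilon\quad \frac12+\epsilon}{1+2\epsilon};x}$; its companion $\!\! \ _2 F_1$ has top parameter $\epsilon-\zav{\frac12+\epsilon}+\frac12=0$ and therefore collapses to $1$, leaving precisely $\zav{\frac{1+\sqrt{1-x}}{2}}^{-2\epsilon}=\exp\zav{-2\epsilon\ln\frac{1+\sqrt{1-x}}{2}}$, whose coefficients yield the claimed line after the $\frac{k!}{(-2)^k}$ normalization. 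For the two arcsine lines I would set $x=\sin\theta$ with $\theta={\rm arcsin}\,x$ and invoke the classical evaluations $\!\! \ _2 F_1\zav{\nadsebou{-\epsilon\quad \epsilon}{\frac12};\sin^2\theta}=\cos(2\epsilon\theta)$ and $\!\! \ _2 F_1\zav{\nadsebou{\frac12+\epsilon\quad \frac12-\epsilon}{\frac32};\sin^2\theta}=\frac{\sin(2\epsilon\theta)}{2\epsilon\sin\theta}$. Both right-hand sides are even in $\epsilon$ (in the second, after multiplying by $x=\sin\theta$ to cancel the $\sin\theta$), which is exactly why only the even brackets $\hzav{\epsilon^{2k}}$ survive; reading these off gives $\frac{(-4)^k}{(2k)!}{\rm arcsin}^{2k}x$ and $\frac{(-4)^k}{(2k+1)!}{\rm arcsin}^{2k+1}x$, which the stated prefactors invert.

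Finally, the $\!\! \ _1 F_1$ and $\!\! \ _n F_{n-1}$ lines do not collapse to elementary functions, so I would expand them term by term. For the harmonic line, $(1)_k=k!$ reduces the coefficient of $x^k$ to $\frac{1}{(1-\epsilon)_k}=\frac{1}{k!}\prod_{j=1}^{k}\zav{1-\frac{\epsilon}{j}}^{-1}$, whose coefficient of $\epsilon$ is $\frac{1}{k!}\sum_{j=1}^{k}\frac{1}{j}=\frac{H_k}{k!}$, and summing over $k$ produces the stated series. For the polylogarithm line, $(\epsilon)_k=\epsilon(\epsilon+1)\cdots(\epsilon+k-1)$ has lowest-order term $\epsilon\,(k-1)!$, so the general term $\frac{(\epsilon)_k^{\,n}}{(1)_k^{\,n-1}k!}=\frac{(\epsilon)_k^{\,n}}{(k!)^{\,n}}$ has lowest-order term $\epsilon^n\zav{\frac{(k-1)!}{k!}}^{n}=\frac{\epsilon^n}{k^n}$; since $\hzav{\epsilon^n}$ selects exactly this leading coefficient (and the $k=0$ term contributes nothing for $n\ge 1$), summing over $k$ yields $\sum_{k\ge 1}\frac{x^k}{k^n}={\rm Li}_n(x)$.

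The main obstacle is twofold. For the $\!\! \ _2 F_1$ block it is locating and correctly specializing the right quadratic and trigonometric identities — in particular verifying that the auxiliary parameter genuinely vanishes so the transformation degenerates to a pure power, and that the parity in $\epsilon$ comes out exactly even. For the term-by-term block it is the analytic justification that $\hzav{\epsilon^m}$ commutes with the infinite sum over $k$: for $\abs{x}<1$ each series is holomorphic in $\epsilon$ on a fixed neighbourhood of $0$ and converges locally uniformly there, so extracting a fixed Taylor coefficient term by term is legitimate, but this interchange is the step that has to be stated with care.
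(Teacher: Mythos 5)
Your proposal is correct and follows essentially the same route as the paper: the binomial/exponential expansion for the $\!\! \ _1 F_0$ lines, the quadratic-transformation identity $\!\! \ _2 F_1\zav{\nadsebou{\epsilon\quad \frac12+\epsilon}{1+2\epsilon};x}=\zav{\frac{1+\sqrt{1-x}}{2}}^{-2\epsilon}$ for the third, the classical evaluations $\cos(2\epsilon z)$ and $\frac{\sin(2\epsilon z)}{2\epsilon\sin z}$ for the arcsine lines, and term-by-term extraction of the $\epsilon$-coefficients of the Pochhammer symbols for the last two. The only (immaterial) deviation is that you expand $(\epsilon)_k^n/(k!)^n$ directly for the polylogarithm where the paper invokes its Taylor-remainder formula, and you spell out the uniform-convergence justification that the paper leaves implicit.
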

\begin{proof}
The first two formulas are obvious.

The third stems from (\ref{gensqrrep}):
$$
\!\! \ _2 F_1\zav{\nadsebou{\epsilon\quad \frac12+\epsilon}{1+2\epsilon};x}=\zav{\frac{1+\sqrt{1-x}}{2}}^{-2\epsilon}.
$$

The next two are the result of known identities:
$$
\!\! \ _2 F_1\zav{\nadsebou{-\epsilon\quad \epsilon}{\frac12};\sin^2 z}=\cos(2\epsilon z),
$$
$$
\!\! \ _2 F_1\zav{\nadsebou{\frac12+\epsilon\quad \frac12-\epsilon}{\frac32};\sin^2 z}=\frac{\sin(2\epsilon z)}{2\epsilon \sin z},
$$
see \cite[15.4.12,15.4.16]{dlmf15.4}.

The next to the last formula is due to the representation of harmonic numbers $H_k$:
$$
H_k:=\sum_{j=1}^{k}\frac{1}{j}=\psi(k+1)-\psi(1)=\hzav{\epsilon}\frac{\Gamma(k+1)\Gamma(1-\epsilon)}{\Gamma(k+1-\epsilon)}=\hzav{\epsilon}\frac{k!}{(1-\epsilon)_k}.
$$

And, finally, the last formula can be easily verified directly expanding the right hand side by (\ref{Taylrem}) and noting that
$$
{\rm Li}_n\zav{x}= x\, \!\! \ _{n+1} F_{n}\zav{\nadsebou{1\dots 1}{2\dots 2};x}.
$$
\end{proof}
We are going to demonstrate usefulness of such representations on number of examples.
\begin{example} First we prove that
$$
\inte{0}{1}\zav{\frac{{\rm arcsin}\, x}{x}}^3{\rm d}x=\frac34\pi\inte{0}{1}\zav{2\, {\rm arctanh}\,x-\frac{1}{x}\ln\frac{1}{1-x^2}}{\rm d}x=\frac32\pi\ln 2-\frac{\pi^3}{16}.
$$
Note that the equality of two integrals does not stems from any conceivable change of variable.
\begin{align}
\inte{0}{1}\zav{\frac{{\rm arcsin} x}{x}}^3{\rm d}x&=-\frac32  \hzav{\epsilon^2} \inte{0}{1}x^{-2}\!\! \ _2 F_1\zav{\nadsebou{\frac12-\epsilon\quad \frac12+\epsilon}{\frac32};x^2}{\rm d}x \nonumber\\
&\stackrel{(\ref{HFFTCformula})}{=} \frac32  \hzav{\hzav{\epsilon^2}\frac{1}{x}\!\! \ _3 F_2\zav{\nadsebou{\frac12-\epsilon\quad \frac12+\epsilon\quad -\frac12}{\frac32\quad \frac12};x^2}}^{1}_0 \nonumber\\
&=\frac32  \hzav{\epsilon^2}\!\! \ _3 F_2\zav{\nadsebou{\frac12-\epsilon\quad \frac12+\epsilon\quad -\frac12}{\frac32\quad \frac12};1}.\label{auxf1}
\end{align}
The value at the lower integration bound $x=0$ is, indeed, zero since
$$
\hzav{\epsilon^2}\!\! \ _3 F_2\zav{\nadsebou{\frac12-\epsilon\quad \frac12+\epsilon\quad -\frac12}{\frac32\quad \frac12};x^2}\stackrel{(\ref{Taylrem})}{=}
-x^2\frac23\hzav{\epsilon^2}\zav{\frac14-\epsilon^2}\!\! \ _4 F_3\zav{\nadsebou{\frac32-\epsilon\quad \frac32+\epsilon\quad \frac12\quad 1}{\frac52\quad \frac32\quad 2};x^2}.
$$
Next we apply the formula \cite[16.4.11]{dlmf16.4}:
$$
\!\! \ _3 F_2\zav{\nadsebou{a_1 \quad a_2 \quad a_3}{c_1\quad c_2};1}=\frac{\Gamma(c_2)\Gamma\zav{\sigma}}{\Gamma\zav{\sigma+a_3}\Gamma(c_2-a_3)}
\!\!\ _3 F_2\zav{\nadsebou{a_3 \quad c_1-a_1\quad c_1-a_2}{c_1\quad \sigma+a_3};1},
$$
where $\sigma:=c_1+c_2-a_1-a_2-a_3$ is the parameter excess, to get
$$
\!\! \ _3 F_2\zav{\nadsebou{\frac12-\epsilon\quad \frac12+\epsilon\quad -\frac12}{\frac32\quad \frac12};1}=\frac{\pi}{4}\!\! \ _3 F_2\zav{\nadsebou{\epsilon\quad -\epsilon\quad -\frac12}{\frac12\quad 1};1}\stackrel{(\ref{Taylrem})}{=}\frac{\pi}{4}\zav{1+\epsilon^2 \!\! \ _4 F_3\zav{\nadsebou{\epsilon+1\quad 1-\epsilon\quad \frac12\quad 1}{\frac32\quad 2\quad 2};1}},
$$
thus (\ref{auxf1}) is
\begin{align*}
(\ref{auxf1})&=\frac{3\pi}{8} \!\! \ _4 F_3\zav{\nadsebou{1\quad 1\quad \frac12\quad 1}{\frac32\quad 2\quad 2};1}\stackrel{(\ref{HFFTCformula})}{=}\frac{3\pi}{4}\inte{0}{1}x\!\! \ _3 F_2\zav{\nadsebou{1\quad 1\quad \frac12}{2\quad \frac32};x^2}{\rm d}x
\stackrel{(\ref{HFFTCformula})}{=}\frac{3\pi}{4}\inte{0}{1}\int \!\! \ _2 F_1\zav{\nadsebou{1\quad 1}{2};x^2}{\rm d}x{\rm d}x\\
&=\frac{3\pi}{4}\inte{0}{1}\int \frac{1}{x^2}\ln\frac{1}{1-x^2}{\rm d}x{\rm d}x
=\frac{3\pi}{4}\inte{0}{1}\zav{\zav{1-\frac{1}{x}}\ln\frac{1}{1-x}-\zav{1+\frac{1}{x}}\ln\frac{1}{1+x}}{\rm d}x\\
&=\frac{3\pi}{4}\inte{0}{1}\zav{-\!\! \ _2F_1\zav{\nadsebou{1\quad 1}{2};x}+\!\! \ _2F_1\zav{\nadsebou{1\quad 1}{2};-x}+2\ln 2}{\rm d}x\\
&\stackrel{(\ref{HFFTCformula})}{=}\frac{3\pi}{4}\zav{-\!\! \ _2F_1\zav{\nadsebou{1\quad 1\quad 1}{2\quad 2};1}+\!\! \ _2F_1\zav{\nadsebou{1\quad 1\quad 1}{2\quad 2};-1}+2\ln 2}\stackrel{(\ref{zetaeta})}{=}\frac{3\pi}{4}\zav{1-\zeta(2)+2\ln 2-1+\eta(2)}.
\end{align*}
The result now stems from the known values of $\zeta(2)=\frac{\pi^2}{6}$ and $\eta(2)=\frac{\pi^2}{12}$ which we are just going to derive.
\end{example}
\begin{example}
The value of zeta function $\zeta(2)$ can be computed using the representation
$$
\hzav{\epsilon^2}\!\! \ _2 F_1\zav{\nadsebou{\epsilon\quad -\epsilon}{1};1}\stackrel{(\ref{Taylrem})}{=}\hzav{\epsilon^2}\zav{1-\epsilon^2\!\! \ _3 F_2\zav{\nadsebou{\epsilon+1\quad 1-\epsilon\quad 1}{2\quad 2};1}}=-\!\! \ _3 F_2\zav{\nadsebou{1\quad 1\quad 1}{2\quad 2};1}.
$$
But
$$
\hzav{\epsilon^2}\!\! \ _2 F_1\zav{\nadsebou{\epsilon\quad -\epsilon}{1};1}\stackrel{(\ref{2F11})}{=}\hzav{\epsilon^2}\frac{\Gamma(1)\Gamma(1)}{\Gamma(1-\epsilon)\Gamma(1+\epsilon)}=\hzav{\epsilon^2}\frac{\sin \pi \epsilon}{\pi\epsilon}=-\frac{\pi^2}{6}.
$$
\end{example}
\begin{example}
The value of $\eta(2)$ can be derived number of ways. We are going to use the fact that the odd and even part of a hypergeometric function can be written in terms of hypergeometric functions, specifically:
\begin{align}\label{evenodd}
\!\! \ _p F_q\zav{\nadsebou{a_1\dots a_p}{c_1\dots c_q};x}&=
\!\! \ _{2p} F_{2q+1}\zav{\nadsebou{\frac{a_1}{2}\quad \frac{a_1+1}{2} \dots \frac{a_p}{2}\quad \frac{a_p+1}{2}}{\frac{c_1}{2}\quad \frac{c_1+1}{2}\dots \frac{c_q}{2}\quad \frac{c_q+1}{2}\quad \frac12};4^{p-q-1}x^2}\\
&+\frac{a_1\cdots a_p}{c_1\cdots c_q}x\ \!\! \ _{2p} F_{2q+1}\zav{\nadsebou{\frac{a_1+1}{2}\quad \frac{a_1+2}{2} \dots \frac{a_p+1}{2}\quad \frac{a_p+2}{2}}{\frac{c_1+1}{2}\quad \frac{c_1+2}{2}\dots \frac{c_q+1}{2}\quad \frac{c_q+2}{2}\quad \frac32};4^{p-q-1}x^2}.\nonumber 
\end{align}
Applying on $\zeta(2)$ we get
$$
\!\! \ _3 F_2\zav{\nadsebou{1\quad 1\quad 1}{2\quad 2};1}=\!\! \ _3 F_2\zav{\nadsebou{1\quad \frac12\quad \frac12}{\frac32\quad \frac32};1}+\frac14 \!\! \ _3 F_2\zav{\nadsebou{1\quad 1\quad 1}{2\quad 2};1},
$$
discovering that
$$
\!\! \ _3 F_2\zav{\nadsebou{1\quad \frac12\quad \frac12}{\frac32\quad \frac32};1}=\frac34\!\! \ _3 F_2\zav{\nadsebou{1\quad 1\quad 1}{2\quad 2};1}=\frac{3}{4}\zeta(2)=\frac{\pi^2}{8}.
$$
And
$$
\!\! \ _3 F_2\zav{\nadsebou{1\quad 1\quad 1}{2\quad 2};-1}=\!\! \ _3 F_2\zav{\nadsebou{1\quad \frac12\quad \frac12}{\frac32\quad \frac32};1}-\frac14 \!\! \ _3 F_2\zav{\nadsebou{1\quad 1\quad 1}{2\quad 2};1},
$$
thus
$$
\eta(2)=\!\! \ _3 F_2\zav{\nadsebou{1\quad 1\quad 1}{2\quad 2};-1}=\frac{\pi^2}{8}-\frac{\pi^2}{24}=\frac{\pi^2}{12}.
$$
\end{example}
\begin{remark} The same trick as in the previous example can be used to establish that Catalan's constant $G$ (\ref{Catalan}) is
$$
G=\Re \zav{\!\! \ _3 F_2\zav{\nadsebou{1\quad 1\quad 1}{2\quad 2};\imag}}=\Im\zav{\hzav{\epsilon^2}\!\! \ _2 F_1\zav{\nadsebou{\epsilon\quad \epsilon}{1};\imag}}.
$$
\end{remark}

Allowing ourselves to differentiate with respect to a parameter, we can now prove  Theorem \ref{HFFTCln}.
\begin{proof} By per partes:
\begin{align*}
\int\frac{1}{x}f\zav{ x^\alpha}{\rm d}x&=\ln x f\zav{ x^\alpha}-\alpha [\epsilon]\int x^{\alpha-1+\epsilon} f'\zav{x^\alpha}{\rm d}x\stackrel{(\ref{HFFTCformula})}{=}
\ln x f\zav{x^\alpha}-\hzav{\epsilon}\alpha \frac{x^{\alpha+\epsilon}}{\alpha+\epsilon} f'\zav{\hypzav{\nadsebou{1+\frac{\epsilon}{\alpha}}{2+\frac{\epsilon}{\alpha}}}x^\alpha}\\
&=\ln x f\zav{x^\alpha}- x^\alpha\zav{\ln x-\frac{1}{\alpha}} f'\zav{\hypzav{\nadsebou{1}{2}}x^\alpha}-\frac{x^\alpha}{\alpha} \hzav{\epsilon}f'\zav{\hypzav{\nadsebou{1+\epsilon}{2+\epsilon}}x^\alpha}.
\end{align*}
The theorem now follows from the fact that
$$
f'\zav{\hypzav{\nadsebou{1}{2}}x^\alpha}\stackrel{(\ref{Taylrem})}{=}\frac{f\zav{x^\alpha}-f(0)}{x^\alpha}.
$$
\end{proof}

\begin{example} As an application of this let us verify the well known fact about Catalan's constant:
$$
G=\!\! \ _3 F_2\zav{\nadsebou{1\quad \frac12\quad \frac12}{\frac32 \quad \frac32};-1}=\frac{1}{8}\zav{\psi'\zav{\frac14}-\pi^2}.
$$
\textit{Proof:}
$$
\!\! \ _3 F_2\zav{\nadsebou{1\quad \frac12\quad \frac12}{\frac32 \quad \frac32};-1}=\inte{0}{1}\!\! \ _2 F_1\zav{\nadsebou{1\quad \frac12}{ \frac32};-x^2}{\rm d}x=\inte{0}{1}\frac{{\rm arctan}(x)}{x}{\rm d}x
$$
We are going to use Theorem \ref{HFFTCln} with data $\alpha=1$ and
$$
f(x):=\arctan(x),\qquad f'(x)=\frac{1}{1+x^2},\qquad f'\zav{\hypzav{\nadsebou{a}{b}}x}\stackrel{(\ref{Pochsqr})}{=}\!\! \ _3 F_2\zav{\nadsebou{1\quad \frac{a}{2}\quad \frac{a+1}{2}}{\frac{b}{2}\quad \frac{b+1}{2}};-x^2}.
$$
Hence
$$
\int\frac{\arctan{(x)}}{x}{\rm d}x=\arctan(x)-x\hzav{\epsilon}\!\! \ _2 F_1\zav{\nadsebou{1\quad \frac{1+\epsilon}{2}}{\frac{3+\epsilon}{2}};-x^2}.
$$
Thus
$$
\inte{0}{1}\frac{\arctan{(x)}}{x}{\rm d}x=\frac{\pi}{4}-\hzav{\epsilon}\!\! \ _2 F_1\zav{\nadsebou{1\quad \frac{1+\epsilon}{2}}{\frac{3+\epsilon}{2}};-1}.
$$
Using the summation formula \cite[15.4.27]{dlmf15.4}:
$$
\!\! \ _2 F_1\zav{\nadsebou{1\quad a}{a+1};-1}=\frac{a}{2}\zav{\psi\zav{\frac{a+1}{2}}-\psi\zav{\frac{a}{2}}},
$$
we get
$$
[\epsilon]
\!\! \ _2 F_1\zav{\nadsebou{1\quad \frac{1+\epsilon}{2}}{\frac{3+\epsilon}{2}};-1}=
[\epsilon] \frac{1+\epsilon}{4}\zav{\psi\zav{\frac{3+\epsilon}{4}}-\psi\zav{\frac{1+\epsilon}{4}}}
$$
$$
=\frac14\zav{\psi\zav{\frac34}-\psi\zav{\frac14}}+\frac{1}{16}\zav{\psi'\zav{\frac34}-\psi'\zav{\frac14}}.
$$
The rest follows from reflection formulas for $\psi,\psi'$ function:
$$
\psi(1-x)-\psi(x)=\pi{\rm cot}(\pi x),\qquad \psi'(1-x)+\psi'(x)=\zav{\frac{\pi}{\sin\pi x}}^2.
$$
\end{example}
\subsection{General rules}
We are now going to prove Proposition \ref{genrule1prop}:
\begin{proof}
Using the Pfaff transform (\ref{Pfaff}) twice and the Leibniz rule we obtain
$$
\hzav{\epsilon}\!\! \ _2 F_1\zav{\nadsebou{a+\epsilon\quad b+\epsilon}{a+b};x}\stackrel{(\ref{Pfaff})^2}{=}
\hzav{\epsilon}(1-x)^{-2\epsilon}\!\! \ _2 F_1\zav{\nadsebou{a-\epsilon\quad b-\epsilon}{a+b};x}
$$
$$
=2\ln\frac{1}{1-x}\!\! \ _2 F_1\zav{\nadsebou{a\quad b}{a+b};x}-
\hzav{\epsilon}\!\! \ _2 F_1\zav{\nadsebou{a+\epsilon\quad b+\epsilon}{a+b};x}.
$$
Collecting like terms gives us the result:
$$
\hzav{\epsilon}\!\! \ _2 F_1\zav{\nadsebou{a+\epsilon\quad b+\epsilon}{a+b};x}=\ln\frac{1}{1-x}\!\! \ _2 F_1\zav{\nadsebou{a\quad b}{a+b};x}.
$$
\end{proof}
\begin{example}
Proposition \ref{genrule1prop} act only on hypergeometric function of the form
$$
\!\! \ _2 F_1\zav{\nadsebou{a\quad b}{a+b};x},
$$
i.e. on those function whose sum of upper parameters is equal to the lower parameter. Notable members of this class are
$$
\arctan\, x=x\!\! \ _2 F_1\zav{\nadsebou{1\quad \frac12}{\frac32};-x^2},
$$
and 
$$
K(x)=\frac{\pi}{2}\!\!\ _2 F_1\zav{\nadsebou{\frac12\quad \frac12}{1};x^2},
$$
i.e. the complete elliptic integral of the first kind, which is perhaps the most ``hypergeometric'' function there is, because its special parameters makes it eligible for more quadratic transforms then any other hypergeometric function (to the author's best knowledge).

Proposition \ref{genrule1prop} applied on these functions gives us the following representations:
$$
\ln\frac{1}{1+x^2}\arctan\,x=[\epsilon] x\!\! \ _2 F_1\zav{\nadsebou{1+\epsilon\quad \frac12+\epsilon}{\frac32};-x^2},
$$
and
$$
\ln\frac{1}{1-x}K(x)=\hzav{\epsilon}\frac{\pi}{2}\!\!\ _2 F_1\zav{\nadsebou{\frac12+\epsilon\quad \frac12+\epsilon}{1};x^2},
$$
which allows us easily to compute multiple integrals. First, for $0<\alpha<\frac12$: 
\begin{align*}
\inte{0}{\infty}\frac{\arctan\, x}{x^{2\alpha+1}}\ln\frac{1}{1+x^2}{\rm d}x&=
\inte{0}{\infty} x^{-2\alpha}\hzav{\epsilon}\!\! \ _2 F_1\zav{\nadsebou{1+\epsilon\quad \frac12+\epsilon}{\frac32};-x^2}{\rm d}x\\
&\stackrel{(\ref{HFFTCformula})}{=}
\hzav{\frac{x^{-2\alpha+1}}{1-2\alpha}\hzav{\epsilon}\!\! \ _3 F_2\zav{\nadsebou{1+\epsilon\quad \frac12+\epsilon\quad \frac12-\alpha}{\frac32\quad \frac32-\alpha};-x^2}}_{0}^{\infty}\\
&\stackrel{(\ref{valueinfty})}{=} \frac{1}{1-2\alpha}\frac{\Gamma\zav{\frac12+\epsilon+\alpha}\Gamma\zav{\epsilon+\alpha}\Gamma\zav{\frac32}\Gamma\zav{\frac32-\alpha}}{\Gamma\zav{1+\epsilon}\Gamma\zav{\frac12+\epsilon}\Gamma\zav{1+\alpha}\Gamma\zav{1}}\\
&=\frac{\pi}{4\alpha\cos(\pi\alpha)}\zav{\psi\zav{\frac12+\alpha}+\psi(\alpha)-\psi(1)-\psi\zav{\frac12}}.
\end{align*}

Second:
\begin{align*}
\inte{0}{1}x\ln\frac{1}{1-x^2}K(x){\rm d}x&=\inte{0}{1}x\hzav{\epsilon}\frac{\pi}{2}\!\!\ _2 F_1\zav{\nadsebou{\frac12+\epsilon\quad \frac12+\epsilon}{1};x^2}{\rm d}x\\
&\stackrel{(\ref{HFFTCformula})}{=}\hzav{\frac{\pi}{2}\hzav{\epsilon} \frac{x^2}{2}\!\!\ _3 F_2\zav{\nadsebou{\frac12+\epsilon\quad \frac12+\epsilon\quad 1}{1\quad 2};x^2}}^{1}_{0}\\
&=\frac{\pi}{4}\hzav{\epsilon}\!\!\ _2 F_1\zav{\nadsebou{\frac12+\epsilon\quad \frac12+\epsilon}{2};1}\stackrel{(\ref{2F11})}{=}\frac{\pi}{4}\hzav{\epsilon}\frac{\Gamma(2)\Gamma(1-2\epsilon)}{\Gamma\zav{\frac32-\epsilon}\Gamma\zav{\frac32-\epsilon}}\\
&=2\psi\zav{\frac32}-2\psi(1)=4(1-\ln 2).
\end{align*}

And third:
\begin{align*}
\inte{0}{1}x\ln\frac{1}{1+x^2}K(\imag x){\rm d}x&
=\inte{0}{1}x\frac{\pi}{2}\hzav{\epsilon}\!\! \ _2 F_1\zav{\nadsebou{\frac12+\epsilon\quad \frac12+\epsilon}{1};-x^2}{\rm d}x\\
&\stackrel{(\ref{HFFTCformula})}{=}\hzav{\frac{\pi}{2}\hzav{\epsilon} \frac{x^2}{2}\!\!\ _3 F_2\zav{\nadsebou{\frac12+\epsilon\quad \frac12+\epsilon\quad 1}{1\quad 2};-x^2}}^{1}_{0}\\
&=\frac{\pi}{4}\hzav{\epsilon}\!\!\ _2 F_1\zav{\nadsebou{\frac12+\epsilon\quad \frac12+\epsilon}{2};-1}.\\
\end{align*}
There is no summation formula readily available for this case. But differentiating the transform \cite[15.8.24]{dlmf15.8} with respect to $z$ at $z=-1$ gives us
$$
\frac{ab}{1+a-b}\!\! \ _2F_1\zav{\nadsebou{a+1\quad b+1}{2+a-b};-1}=\frac{a}{2^{a+1}}\frac{\Gamma(a-b+1)\Gamma\zav{\frac12}}{\Gamma\zav{\frac{a+1}{2}}\Gamma\zav{\frac{a}{2}-b+1}}+\frac{1}{2^{a+1}}\frac{\Gamma(a-b+1)\Gamma\zav{-\frac12}}{\Gamma\zav{\frac{a}{2}}\Gamma\zav{\frac{a}{2}-b+\frac12}}.
$$ 
Dividing by the factor $\frac{ab}{1+a-b}$ and
substituting $b=a=-\frac12+\epsilon$ we obtain
$$
\!\! \ _2F_1\zav{\nadsebou{\frac12+\epsilon\quad \frac12+\epsilon}{2};-1}=\frac{\sqrt{\pi}}{\zav{\epsilon-\frac12}^2 2^{\frac12+\epsilon}}\zav{\frac{\epsilon-\frac12}{\Gamma\zav{\frac14+\frac{\epsilon}{2}}\Gamma\zav{\frac54-\frac{\epsilon}{2}}}-\frac{2}{\Gamma\zav{-\frac14+\frac{\epsilon}{2}}\Gamma\zav{\frac34-\frac{\epsilon}{2}}}}.
$$
Differentiating with respect to $\epsilon$ at $\epsilon=0$, multiplying by $\frac{\pi}{4}$ and invoking reflection formula for $\Gamma$ function several times gives us the final result:

\begin{align*}
\inte{0}{1}x\ln\frac{1}{1+x^2}K(\imag x){\rm d}x&=\frac{1}{4\sqrt{2\pi}}\zav{(2-\ln 2)\Gamma^2\zav{\frac14}+4(\ln 2-4)\Gamma^2\zav{\frac34}}.
\end{align*} 
\end{example}

\section{The integral $I_\alpha$}\label{S5}
We now attempt to compute the integral
$$
I_\alpha:=\inte{0}{\infty}\frac{1}{\zav{1+x^2}^{\frac32}}\zav{\varphi(x)+\sqrt{\varphi(x)}}^{-\alpha}{\rm d}x,\qquad \varphi(x):=1+\frac43\frac{x^2}{(1+x^2)^2}.
$$
Making the change of variable:
$$
\frac{x}{\sqrt{1+x^2}}=t,\qquad \varphi(x)=1+\frac43 t^2(1-t^2), \qquad (1+x^2)^{-\frac32}{\rm d}x={\rm d}t,
$$
we obtain
\begin{equation}\label{integralform}
I_\alpha=\inte{0}{1}\zav{\sqrt{\varphi}+\varphi}^{-\alpha}{\rm d}t=\inte{0}{1}\varphi^{-\frac{\alpha}{2}}\zav{1+\sqrt{\varphi}}^{-\alpha}{\rm d}t.
\end{equation}
Using (\ref{gensqrrep}) we arrive at the form
\begin{align}\label{integralform1}
I_\alpha&=2^{-\alpha}\inte{0}{1}\zav{1+\frac43 t^2(1-t^2)}^{-\frac{\alpha}{2}}\!\! \ _2 F_1\zav{\nadsebou{\frac{\alpha}{2}\quad \frac{\alpha+1}{2}}{\alpha+1};-\frac43 t^2(1-t^2)}{\rm d}t,\\
&=2^{-\alpha}\inte{0}{1}\zav{1+\frac43 t^2(1-t^2)}^{-\frac{\alpha-1}{2}}\!\! \ _2 F_1\zav{\nadsebou{1+\frac{\alpha}{2}\quad \frac{\alpha+1}{2}}{\alpha+1};-\frac43 t^2(1-t^2)}{\rm d}t,\label{integralform2}
\end{align}
where the equality stems from applying Pfaff transform (\ref{Pfaff}) twice.

The integrand is a product of two hypergeometric functions, which cannot (in general) be represented as a single hypergeometric function. Proposition \ref{HFFTC} tells us that the antiderivative is a single variable hypergeometric function \textit{if and only if} the integrand is.

Hence we can describe the result only in terms of a multi-variable function. Specifically, the $\tilde F_1$ function:  

Applying Proposition \ref{intrep2} together with the definition of $\tilde F_1$ (\ref{tildeF1}) we get:
\begin{align}
I_\alpha&=2^{-\alpha} \tilde F_1\zav{\nadsebou{1\quad \frac12}{\frac34\quad \frac54};\nadsebou{\frac{\alpha}{2}\quad \frac{\alpha+1}{2}}{\alpha+1}\nadsebou{\frac{\alpha}{2}}{-};-\frac13,-\frac13}\nonumber \\
&=2^{-\alpha} \tilde F_1\zav{\nadsebou{1\quad \frac12}{\frac34\quad \frac54};\nadsebou{1+\frac{\alpha}{2}\quad \frac{\alpha+1}{2}}{\alpha+1}\nadsebou{\frac{\alpha-1}{2}}{-};-\frac13,-\frac13}. \label{tildeF1rep}
\end{align}

More illuminating approach is perhaps to use hypergeometrization:
For 
$$
f_\alpha(x):=(1-x)^{-\frac{\alpha}{2}}\!\! \ _2 F_1\zav{\nadsebou{\frac{\alpha}{2}\quad \frac{\alpha+1}{2}}{\alpha+1};x}=(1-x)^{-\frac{\alpha-1}{2}}\!\! \ _2 F_1\zav{\nadsebou{1+\frac{\alpha}{2}\quad \frac{\alpha+1}{2}}{\alpha+1};x},
$$
we have
$$
I_\alpha=2^{-\alpha}f_\alpha\zav{\hypzav{\nadsebou{1\quad \frac12}{\frac34\quad \frac54}}-\frac13}.
$$

We now prove Proposition \ref{Ialpha}.
\begin{proof}
\textit{Case }$\alpha=0$. Obviously $I_0=1$.

\bigskip
\textit{Case }$\alpha=1$:
We can immediately see that 
$$
f_{1}(x)=\!\! \ _2 F_1\zav{\nadsebou{\frac32\quad 1}{2};x},
$$
thus
$$
I_1=\frac12\!\! \ _4 F_3\zav{\nadsebou{1\quad \frac12\quad\frac32\quad 1}{2\quad \frac34\quad \frac54};-\frac13}=-\frac38 \hzav{\epsilon} \!\! \ _3 F_2\zav{\nadsebou{\epsilon \quad -\frac12\quad\frac12}{ -\frac14\quad \frac14};-\frac13}.
$$

\bigskip
\textit{Case }$\alpha=-1$:
Similarly
$$
f_\alpha(x)=(1-x)^{-\frac{\alpha}{2}}\!\! \ _2 F_1\zav{\nadsebou{\frac{\alpha}{2}\quad \frac{\alpha+1}{2}}{\alpha+1};x}\stackrel{(\ref{Taylrem})}{=}(1-x)^{-\frac{\alpha}{2}}\zav{1+\frac{\alpha}{4}x\!\! \ _3 F_2\zav{\nadsebou{\frac{\alpha+2}{2}\quad \frac{\alpha+3}{2}\quad 1}{\alpha+2\quad 2};x}},
$$
hence
$$
f_{-1}(x)=(1-x)^{\frac12}\zav{1-\frac{x}{4}\!\! \ _2 F_1\zav{\nadsebou{1\quad \frac12}{2};x}}\stackrel{(\ref{Taylrem})}{=}\frac{(1-x)^{\frac12}+1-x}{2},
$$
and thus we have
$$
I_{-1}=\!\!\ _3 F_2\zav{\nadsebou{1\quad \frac12\quad -\frac12}{\frac34\quad \frac54};-\frac13}+\frac{53}{45}.
$$

\bigskip
\textit{Case }$\alpha=-n$:
In fact, all $I_{-n}$ for $n\in\mathbb{N}$ can reduced to a linear combination of hypergeometric functions:
$$
I_{-n}=\sum_{k=0}^{n}\zav{\nadsebou{n}{k}}\!\! \ _3 F_2\zav{\nadsebou{-\frac{n+k}{2}\quad 1\quad \frac12}{\frac34\quad \frac54};-\frac13}.
$$
This follows immediately by expanding the term $(1+\sqrt{\varphi})^{n}$ in the integral (\ref{integralform}).

\bigskip
\textit{Case }$\alpha=2$:
$$
f_2(x)=(1-x)^{-1}\!\! \ _2 F_1\zav{\nadsebou{1\quad \frac32}{3};x}.
$$
Presence of the parameters $\nadsebou{1}{3}$ in the $\!\! \ _2 F_1$ function suggests that it can be understood as a Taylor reminder of a simpler function. And, indeed, that is the case:
$$
-\frac{x^2}{8}\!\! \ _2 F_1\zav{\nadsebou{1\quad \frac32}{3};x}\stackrel{(\ref{Taylrem})}{=}(1-x)^{\frac12}-1+\frac{x}{2}.
$$
Thus
$$
-\frac{x^2}{8}(1-x)^{-1}\!\! \ _2 F_1\zav{\nadsebou{1\quad \frac32}{3};x}=(1-x)^{-\frac12}-\zav{1-\frac{x}{2}}(1-x)^{-1}\stackrel{(\ref{Taylrem})}{=}\frac{x^2}{2}+\frac{x^2}{2}(x-2)(1-x)^{-1}+\frac{3x^2}{8}\!\! \ _2F_1\zav{\nadsebou{\frac52\quad 1}{3};x},
$$
hence we arrive at the following representation:
$$
f_{2}(x)=(1-x)^{-1}\!\! \ _2 F_1\zav{\nadsebou{1\quad \frac32}{3};x}=-4-4(x-2)\!\! \ _1 F_0\zav{\nadsebou{1}{-};x}-3\!\! \ _2F_1\zav{\nadsebou{\frac52\quad 1}{3};x}.
$$
Making hypergeometrization and dividing by 4 we get
$$
I_2=-1+2\!\! \ _3 F_2\zav{\nadsebou{1\quad 1\quad \frac12}{\frac34\quad \frac54};-\frac13}+\frac{8}{45}\!\! \ _3 F_2\zav{\nadsebou{1\quad 2\quad \frac32}{\frac74\quad \frac94};-\frac13}-\frac34 \!\! \ _4
 F_3\zav{\nadsebou{1\quad 1\quad \frac12\quad \frac52}{\frac34\quad \frac54\quad 3};-\frac13}.
$$
With the aid of formula:
\begin{equation}\label{aidfor}
\!\! \ _3 F_2\zav{\nadsebou{1\quad \frac{a}{2}\quad \frac{a+1}{2}}{\frac{c}{2}\quad \frac{c+1}{2}};-x^2}=\Re\zav{\!\! \ _2 F_1\zav{\nadsebou{1\quad a}{c};\imag x}},
\end{equation}
which stems from (\ref{evenodd}) we can simplify first two terms:
$$
I_2=-1+\Re\zav{2\!\! \ _2 F_1\zav{\nadsebou{1\quad 1}{\frac32};\frac{\imag}{\sqrt{3}}}+\frac{8}{45}\!\! \ _2 F_1\zav{\nadsebou{1\quad 3}{\frac72};\frac{\imag}{\sqrt{3}}}}-\frac34 \!\! \ _4
 F_3\zav{\nadsebou{1\quad 1\quad \frac12\quad \frac52}{\frac34\quad \frac54\quad 3};-\frac13}.
$$
Since for $z=x+\imag y$ it holds:
$$
\Re\zav{\!\! \ _2 F_1\zav{\nadsebou{1\quad 1}{\frac32};\frac{z^2}{z^2+1}}}\stackrel{(\ref{Pfaff})}{=}\Re\zav{\frac{1+z^2}{z}\arctan\ z}=\frac{\abs{z}^2-1}{\abs{z}^2}\zav{\frac{y}{4}\ln\frac{1-2y+\abs{z}^2}{1+2y+\abs{z}^2}+\frac{x}{2}\arctan\frac{2x}{1-\abs{z}^2}},
$$
we get
$$
\Re\zav{\!\! \ _2 F_1\zav{\nadsebou{1\quad 1}{\frac32};\frac{\imag}{\sqrt{3}}}}=\frac{1}{8\sqrt{2}}\zav{6\,\arctan\sqrt{2}-\sqrt{3}\ln\zav{5-2\sqrt{6}}}.
$$

Similarly
$$
\!\! \ _2 F_1\zav{\nadsebou{1\quad 3}{\frac72};\frac{z^2}{z^2+1}}\stackrel{(\ref{Pfaff})}{=}(1+z^2)^{3}\!\! \ _2 F_1\zav{\nadsebou{\frac52\quad 3}{\frac72};-z^2}\stackrel{(\ref{HFFTCformula})}{=}\frac{(1+z^2)^3 5}{z^5}\int \frac{z^4}{(1+z^2)^{3}}{\rm d}z
$$
$$
=\frac{15}{8} \frac{(1+z^2)^3}{z^5}\arctan\,z-\frac{5}{32}(3+5z^2)(1+z^2).
$$
Thus
$$
\Re\zav{\!\! \ _2 F_1\zav{\nadsebou{1\quad 3}{\frac72};\frac{\imag}{\sqrt{3}}}}=\frac{45}{8}-\frac{135\sqrt{2}}{64}\arctan\sqrt{2}-\frac{45}{128}\sqrt{6}\ln\zav{5+2\sqrt{6}}.
$$
Combining these two formulas gives as the desired result.

\bigskip
\textit{Case }$\hzav{\alpha}$:
$$
f_\alpha(x)=(1-x)^{-\frac{\alpha}{2}}\!\! \ _2 F_1\zav{\nadsebou{\frac{\alpha}{2}\quad \frac{\alpha+1}{2}}{\alpha+1};x}\stackrel{(\ref{Taylrem})}{=}
\zav{1+\alpha\frac12 \ln\frac{1}{1-x}+O(\alpha^2)}\zav{1+\frac{\alpha}{4}x\!\! \ _3 F_2\zav{\nadsebou{\frac{\alpha}{2}+1\quad \frac{\alpha+3}{2}\quad 1}{\alpha+2\quad 2};x}},
$$
thus
$$
\hzav{\alpha} f_\alpha(x)=\frac{1}{2}\ln\frac{1}{1-x}+\frac{x}{4}\!\! \ _3 F_2\zav{\nadsebou{1\quad \frac{3}{2}\quad 1}{2\quad 2};x}.
$$
Since
$$
\hzav{\alpha}I_\alpha=-\ln 2 I_0+\hzav{\alpha} f_\alpha\zav{\hypzav{\nadsebou{1 \quad \frac12}{\frac34\quad \frac54}}-\frac13},
$$
we arrive
$$
\hzav{\alpha}I_\alpha=\ln\frac12-\frac{4}{45}\!\! \ _3 F_2\zav{\nadsebou{1\quad 1\quad \frac32}{\frac74\quad \frac94};-\frac13}-\frac{2}{45}\!\! \ _4 F_3\zav{\nadsebou{1\quad 1\quad \frac32\quad \frac32}{2\quad\frac74\quad \frac94};-\frac13}.
$$

Similarly as in the previous case, the first term can be simplified, noting that
$$
\!\! \ _3 F_2\zav{\nadsebou{1\quad 1\quad \frac32}{\frac74\quad \frac94};-\frac13}=\Re\zav{\!\! \ _2 F_1\zav{\nadsebou{1\quad 2}{\frac72};\frac{\imag}{\sqrt{3}}}}.
$$
Since
$$
\!\! \ _2 F_1\zav{\nadsebou{1\quad 2}{\frac72};\frac{z^2}{z^2+1}}\stackrel{(\ref{Pfaff})}{=}(1+z^2)^{2}\!\! \ _2 F_1\zav{\nadsebou{\frac52\quad 2}{\frac72};-z^2}\stackrel{(\ref{HFFTCformula})}{=}\frac{(1+z^2)^2 5}{z^5}\int \frac{z^4}{(1+z^2)^{2}}{\rm d}z
$$
$$
= \frac{5(1+z^2)^2}{z^5}\zav{z+\frac12\frac{z}{1+z^2}-\frac32\arctan\,z}.
$$
Thus
$$
\Re\zav{\!\! \ _2 F_1\zav{\nadsebou{1\quad 2}{\frac72};\frac{\imag}{\sqrt{3}}}}=-\frac{45}{2}+\frac{45\sqrt{2}}{8}\arctan\sqrt{2}+\frac{45}{16}\sqrt{6}\ln\zav{5+2\sqrt{6}}.
$$
Substituting this into gives us the desired result.

\end{proof}

Now Proposition \ref{P1} and \ref{P2}.
\begin{proof}
Unfortunately, we are unable to similarly reduce the function 
$$
f_{\frac12}(x)=(1-x)^{-\frac14} \!\! \ _2 F_1\zav{\nadsebou{\frac{1}{4}\quad \frac{3}{4}}{\frac32};x},
$$ 
for the original integral $I_{\frac12}$. An obvious idea is to eliminate troublesome term $(1-x)^{-\frac14}$ by Euler transform, but the power is not right.

If instead of $\frac14$ there was $\frac12$ then we would have single hypergeometric function 
$$
f_{true}(x):=(1-x)^{-\frac14} f_{\frac12}(x)=\!\! \ _2 F_1\zav{\nadsebou{\frac{5}{4}\quad \frac{3}{4}}{\frac32};x}.
$$
Multiplying $f_\frac12$ by $(1-x)^{-\frac14}$ is the same as multiplying the integrand in $I_{\frac12}$ by $\varphi^{-\frac14}$ hence we are dealing with the integral $I_{true}$.

For this case we have
$$
I_{true}=\frac{1}{\sqrt{2}} f_{true}\zav{\hypzav{\nadsebou{1\quad \frac12}{\frac34\quad \frac54}}-\frac13}=\frac{1}{\sqrt{2}} \!\! \ _4 F_3\zav{\nadsebou{1\quad \frac12\quad\frac{5}{4}\quad \frac{3}{4}}{\frac34\quad \frac54\quad\frac32};-\frac13}=\frac{1}{\sqrt{2}}
\!\! \ _2 F_1\zav{\nadsebou{1\quad \frac12}{\frac32};-\frac13}
$$
$$
=\frac{1}{\sqrt{2}}\frac{\arctan{\frac{1}{\sqrt{3}}}}{\frac{1}{\sqrt{3}}}=\frac{1}{\sqrt{2}}\frac{\frac{\pi}{6}}{\frac{1}{\sqrt{3}}}=\frac{\pi}{2\sqrt{6}},
$$
thus proving Proposition \ref{P1}.

Proof of Proposition \ref{P2} requires only to realize that 
$$
I_{\alpha,true}= \frac{1}{\sqrt{2}} f_{true}\zav{\hypzav{\nadsebou{1\quad \frac12}{\frac34\quad \frac54}}-\alpha^2}.
$$
\end{proof}


\begin{thebibliography}{1}
\bibitem{mobref1} T. Amdeberhan, V. Moll:
\textit{A formula for a quartic integral: a survey of old proofs and some new ones.} Ramanujan J., 18 (2009), pp. 91-102 
\bibitem{dlmfref3} W. N. Bailey (1964): \textit{Generalized Hypergeometric Series.} Stechert-Hafner, Inc., New York.
\bibitem{Erdelyi} H. Bateman, A. Erdelyi, Higher transcendental functions, vol. 1, McGraw-Hill Book Co., New York, 1953.
\bibitem{Baker} Baker, Alan (1990), Transcendental number theory, Cambridge Mathematical Library (2nd ed.), Cambridge University Press, ISBN 978-0-521-39791-9, MR 0422171
\bibitem{B2} P. Blaschke: {\it Berezin transform on harmonic Bergman spaces on the real ball}, J. Math. Anal. Appl. 411 (2014), no. 2, 607-630.
\bibitem{mobref2} E.E. Boos, A.I. Davydychev:
\textit{A method of evaluating massive Feynman integrals},
Theoret. and Math. Phys., 89 (1991), pp. 1052-1063
\bibitem{dlmfref1} W. Bühring (1992): \textit{Generalized hypergeometric functions at unit argument.} Proc. Amer. Math. Soc. 114 (1), pp. 145–153.
\bibitem{Carlson} Carlson, B. C., Shaffer, D. B.: Starlike and prestarlike hypergeometric functions. SIAM J. Math. Anal. 159, 737–745 (1984) MR0747433 (85j:30014)
\bibitem{Cherry1} G. W. Cherry. \textit{Integration in finite terms with special functions: the error function.} J. Symb. Comput., 1:283-302,1985.
\bibitem{Cherry2} G. W. Cherry. \textit{Integration in finite terms with special functions: the logarithmic function.} SIAM J. Comput., 15:1-21,1986.
\bibitem{Cherry3} G. W. Cherry. \textit{An analysis of the rational exponential integral.} J. Symb. Comput., SIAM J. Comput., 18:893-905,1989.
\bibitem{dlmfref4} A. Cuyt, V. B. Petersen, B. Verdonk, H. Waadeland, W. B. Jones (2008): \textit{Handbook of Continued Fractions for Special Functions}. Springer, New York.
\bibitem{mob} I. Gonzales, V.H. Moll: \textit{Definite integrals by the method of brackets. Part 1}, Advances in Applied Mathematics, Volume 45, Issue 1, July 2010, Pages 50-73, \url{https://doi.org/10.1016/j.aam.2009.11.003}
\bibitem{mobref3} I. Gonzalez, I. Schmidt:\textit{
Recursive method to obtain the parametric representation of a generic Feynman diagram.} Phys. Rev. D, 72 (2005), p. 106006
\bibitem{GR} I.S. Gradshteyn, I.M. Ryzhik. \textit{Table of Integrals, Series, and Products.} Edited by A. Jeffrey and D. Zwillinger. Academic Press, New York, 7th edition, 2007.
\bibitem{dlmfref2} Y. S. Kim, A. K. Rathie, R. B. Paris (2013): \textit{An extension of Saalschütz’s summation theorem for the series} $\!\! \ _{r+3}F_{r+2}$. Integral Transforms Spec. Funct. 24 (11), pp. 916–921
\bibitem{Lauricella} Lauricella, Giuseppe (1893). {\it Sulle funzioni ipergeometriche a pi\`u variabili}. Rendiconti del Circolo Matematico di Palermo (in Italian) 7 (S1): 111–158. doi:10.1007/BF03012437. JFM 25.0756.01.
\bibitem{Luke} Y.L. Luke, The special functions and their approximations, Academic Press, 1969. MR0241700 (39 \# 3039).
\bibitem{Olver}F. W. J. Olver: {\it Asymptotics and Special Functions}, Academic Press, New York, 1974.
\bibitem{Srivastava} H.M. Srivastava, P.W. Karlsson: Multiple Gaussian Hypergeometric Series, Halsted Press (Ellis Horwood Limited, Chichester), John Wiley and Sons, New York, London, Toronto, 1985. MR0834385
\bibitem{dlmf} \url{http://dlmf.nist.gov}
\bibitem{dlmfeq2} \url{http://dlmf.nist.gov/16.2}
\bibitem{dlmfeq} \url{http://dlmf.nist.gov/16.11}
\bibitem{dlmf16.4} \url{http://dlmf.nist.gov/16.4}
\bibitem{dlmf15.4} \url{http://dlmf.nist.gov/15.4}
\bibitem{dlmf15.8} \url{http://dlmf.nist.gov/15.8}
\end{thebibliography}
\end{document}